\newcommand{\defeq}{\vcentcolon=}
\newcommand{\eqdef}{=\vcentcolon}
\newcommand{\Law}{\mathcal{L}}
\newcommand{\weaklyto}{%
  \mathrel{\vbox{\offinterlineskip\ialign{%
    \hfil##\hfil\cr
    $\scriptscriptstyle\mathrm{w}$\cr
    $\to$\cr
}}}}
\newcommand{\distto}{%
  \mathrel{\vbox{\offinterlineskip\ialign{%
    \hfil##\hfil\cr
    $\scriptscriptstyle\mathrm{d}$\cr
    \noalign{\kern-.05ex}
    $\to$\cr
}}}}
\newcommand{\fddistto}{%
  \mathrel{\vbox{\offinterlineskip\ialign{%
    \hfil##\hfil\cr
    $\scriptscriptstyle\mathrm{fdd}$\cr
    \noalign{\kern-.05ex}
    $\longrightarrow$\cr
}}}}
\newcommand{\Probto}{%
  \mathrel{\vbox{\offinterlineskip\ialign{%
    \hfil##\hfil\cr
    $\scriptscriptstyle\Prob$\cr
    \noalign{\kern-.05ex}
    $\to$\cr
}}}}
\newcommand{\TVto}{%
  \mathrel{\vbox{\offinterlineskip\ialign{%
    \hfil##\hfil\cr
    $\scriptscriptstyle\mathrm{TV}$\cr
    \noalign{\kern-.05ex}
    $\to$\cr
}}}}
\newcommand{\N}{\mathbb{N}}
\newcommand{\R}{\mathbb{R}}
\newcommand{\B}{\mathcal{B}}
\newcommand{\Gen}{\mathcal{G}}
\newcommand{\I}{\mathcal{I}}
\newcommand{\J}{\mathcal{J}}
\newcommand{\Surv}{\mathcal{S}}
\newcommand{\Prob}{\mathbb{P}}
\DeclareMathOperator{\Var}{\mathrm{Var}}
\DeclareMathOperator{\Cov}{\mathrm{Cov}}
\newcommand{\E}{\mathbb{E}}
\newcommand{\F}{\mathcal{F}}
\renewcommand{\H}{\mathcal{H}}
\newcommand{\cZ}{\mathcal{Z}}
\newcommand{\1}{\mathbbm{1}}
\newcommand{\eqdist}{%
  \mathrel{\vbox{\offinterlineskip\ialign{%
    \hfil##\hfil\cr
    $\scriptscriptstyle\mathrm{law}$\cr
    \noalign{\kern.2ex}
    $=$\cr
}}}}
\newcommand{\dt}{\mathrm{d} \mathit{t}}
\newcommand{\du}{\mathrm{d} \mathit{u}}
\newcommand{\dx}{\mathrm{d} \mathit{x}}
\theoremstyle{plain}
\newtheorem{theorem}{Theorem}[section]
\newtheorem{corollary}[theorem]{Corollary}
\newtheorem{lemma}[theorem]{Lemma}
\theoremstyle{remark}
\newtheorem{remark}[theorem]{Remark}
\theoremstyle{definition}
\numberwithin{equation}{section}
\begin{document}

\title[Gaussian fluctuations of Nerman's martingale]{Gaussian fluctuations
and a law of the iterated logarithm for Nerman's martingale in the supercritical general branching process}

\authors{Alexander~Iksanov, Konrad~Kolesko and Matthias~Meiners}

\address{Alexander~Iksanov, Faculty of Computer Science and Cybernetics,
Taras Shevchenko National University of Kyiv, Ukraine}
	\email{iksan@univ.kiev.ua}

\address{Konrad~Kolesko, Mathematisches Institut, University of Gie\ss en, Germany
and Mathematical Institute, University of
Wroc{\l}aw, Poland}
	\email{konrad.kolesko@math.uni-giessen.de}

\address{Matthias~Meiners, Mathematisches Institut, University of Gie\ss en, Germany}
	\email{matthias.meiners@math.uni-giessen.de}

\keywords{Asymptotic fluctuations, functional central limit theorem, law of the iterated logarithm,
Nerman's martingale, supercritical general branching process}
\subjclass[2010]{60J80,	
			 60F05,	
			 60F17}	

\begin{abstract}
In his, by now, classical work from 1981, Nerman
made extensive use of a crucial martingale $(W_t)_{t \geq 0}$
to prove convergence in probability, in mean and almost surely,
of supercritical general branching processes (also known as Crump-Mode-Jagers branching processes)
counted with a general characteristic.
The martingale terminal value $W$ figures in the limits of his results.

We investigate the rate at which the martingale, now called \emph{Nerman's martingale},
converges to its limit $W$. More precisely, assuming the existence of a Malthusian parameter $\alpha > 0$
and $W_0\in L^2$, we prove a functional central limit theorem for $(W-W_{t+s})_{s\in\R}$, properly normalized, as $t\to\infty$.
The weak limit is a randomly scaled time-changed Brownian motion. Under an additional technical assumption, we prove a law of the iterated logarithm for $W-W_t$.
\end{abstract}

\maketitle

\section{Introduction}		\label{sec:Intro}

The general (Crump-Mode-Jagers) branching process is a classical model
for an evolving population.
The process starts with one initial ancestor at time $0$ which produces
offspring at the times of a point process $\xi$ on $(0,\infty)$.
Every other individual in the process reproduces according to an independent copy of $\xi$
shifted by the individual's time of birth.
The model contains a variety of other models such as Galton-Watson processes,
age-dependent branching processes, Bellman-Harris processes and Sevast'yanov processes.
Counted with a random characteristic the general branching process offers a lot of flexibility in modelling
and allows considerations of, for instance, the number of individuals in the population
in some random phase of life or having some random age-dependent property.
We refer to \cite{Jagers:1975} for a textbook introduction.

The weak and strong laws of large numbers for supercritical general branching processes counted with a random characteristic
were given by Nerman \cite{Nerman:1981}.
Assuming the existence of some $\alpha > 0$ such that $m(\alpha)=1$
where $m$ is the Laplace transform of the intensity measure of the reproduction point process $\xi$,
these laws exhibit exponential growth of the order $e^{\alpha t}$ of the process,
i.e., $\alpha$ is a Malthusian exponent.
Key to the proof of these results is a crucial martingale $(W_t)_{t \geq 0}$,
nowadays called Nerman's martingale.
The martingale limit $W$ appears in the aforementioned weak and strong laws.
Further, the rate of convergence of the martingale to its limit is relevant for
the rate of convergence in the weak law of large numbers for the general branching process \cite{Iksanov+Meiners:2015b}.

Recently, Janson \cite{Janson:2018}
studied the fluctuations of supercritical general branching processes in the case
where $\xi$ is concentrated on a lattice.
A natural first step towards extending
Janson's results to the non-lattice case is to investigate
the asymptotic fluctuations of Nerman's martingale around its limit.
In the paper at hand, we address this problem by proving a functional central limit theorem with a deterministic scaling for $W-W_t$.
This functional limit theorem is complemented by a law of the iterated logarithm.

The results of the present paper are analogous to those for Biggins' martingale
in the branching random walk \cite{Iksanov+Kabluchko:2016}.
Fluctuations of the latter martingale, also at complex parameters,
have received a lot of attention lately \cite{Iksanov+Kolesko+Meiners:2019, Iksanov+Kolesko+Meiners:2021, Roesler+al:2002}.
The endmost paper \cite{Roesler+al:2002} is in the more general context of weighted branching processes.
Predecessors of these results are central limit theorems for the classical martingale in the Galton-Watson process
\cite{Heyde:1970,Heyde:1971,Heyde+Brown:1971}.
There are further rate-of-convergence results for multitype Galton-Watson processes,
we refrain from providing references here and refer to the discussion in \cite{Maillard+Pain:2019} instead.

Closely related to Biggins' martingale in the branching random walk is the derivative martingale,
the fluctuations of which have been addressed in \cite{Buraczewksi+al:2020}.
The counterpart for the derivative martingale in branching Brownian motion is contained in \cite{Maillard+Pain:2019}.
Rate-of-convergence results for more complicated branching processes,
including branching diffusions and superprocesses, can be found in \cite{Ren+Song+Zhao:2019} and the references therein.

\section{Model and assumptions}		\label{sec:model}

We begin by introducing the standard Ulam-Harris notation in the context of the general branching process.
We mainly follow \cite{Jagers:1989}.
Let $\I \defeq \bigcup_{n \in \N_0} \N^n$ be the infinite Ulam-Harris tree
where $\N = \{1,2,\ldots\}$, $\N_0 = \N \cup \{0\}$, and $\N^0 = \{\varnothing\}$
contains only the empty tuple, which we denote by $\varnothing$.
We identify individuals in a population with their descent, which is encoded by elements of $\I$.
For instance, $\varnothing$ is the label of the ancestor, and if $u=(u_1,\ldots,u_m) \in \I$,
then $u$ is the $u_m$th child of the $u_{m-1}$th child of $\ldots$ of the $u_1$th child of the ancestor $\varnothing$.
We abbreviate $u=(u_1,\ldots,u_m)$ by $u_1 \ldots u_m$ and set $|u|$ for the generation of $u$. Here, $|u|=m$.
Similarly, if $v = (v_1,\ldots,v_n)$, we write $uv$ for $(u_1, \ldots, u_m, v_1,\ldots,v_n)$.
Further, if $k \leq m$, we write $u|_k$ for $u_1 \ldots u_k$, the ancestor of $u$ in the $k$th generation.
If $u = v|_m$ for some $0 \leq m \leq |v|$, i.e., when $u$ is an ancestor of $v$, then we write $u \preceq v$,
and say that $v$ stems from $u$. For a subset $F$ of $\I$, we say that $v$ stems from  $F$, and in this case write $F \preceq v$, if $u \preceq v$ for some $u \in F$.
For $F,L \subseteq \I$, we write $F \preceq L$ if every $v \in L$ stems from some $u \in F$.

\subsection{The model}

Let $(\Omega,\F,\Prob)$ be a probability space
on which a family $(\xi_u)_{u \in \I}$ of independent,
identically distributed (i.i.d.)\ point processes on $(0,\infty)$ is defined.
Formally, each $\xi_u$ is an integer-valued mapping
$\xi_u: \Omega \times \B((0,\infty)) \to [0,\infty]$ such that
\begin{itemize}
	\item for fixed $\omega \in \Omega$, $\xi_u(\omega,\cdot): \B((0,\infty)) \to [0,\infty]$ is a measure,
	\item whereas, for  each Borel set $B \in \B((0,\infty))$,
		the map $\xi_u(\cdot,B):\Omega \to [0,\infty]$ is a random variable.
\end{itemize}
Here, $\B((0,\infty))$ is the Borel $\sigma$-algebra of $(0,\infty)$. We write $\xi_u = \sum_{k=1}^{N(u)} \delta_{X_k(u)}$ for $u \in \I$
where $N(u)\defeq \xi_u((0,\infty))$ is a random variable taking values in $\N_0 \cup \{\infty\}$.
For convenience, we abbreviate $\xi_\varnothing$ to $\xi$, $N(\varnothing)$ to $N$, $X_k(\varnothing)$ to $X_k$ etc.
Further, we define $S(u)$, the time of birth of individual $u$, recursively via
\begin{align*}
S(\varnothing) \defeq 0
\qquad	\text{and}	\qquad
S(uk) \defeq S(u) + X_k(u)		\quad	\text{for }u \in \I \text{ and } k \in \N
\end{align*}
with the convention that $X_k(u) \defeq \infty$ if $k > N(u)$.
If $u \in \I$ with $S(u) = \infty$, then individual $u$ is considered never born.
We define $\Gen_n \defeq \{u \in \N^n: S(u) < \infty\}$ to be the $n$th generation individuals, $n \in \N_0$.
Further, $\Gen \defeq \bigcup_{n \in \N_0} \Gen_n$ denotes the set of all individuals that are ever born.
We set
\begin{equation*}
\I_t \defeq \{u \in \I: S(u|_{|u|-1}) \leq t < S(u) < \infty\}
\end{equation*}
to be the coming generation at time $t$, that is, $\I_t$ is the set of particles born after $t$ whose parents were born at or before time $t$.

Later on, different filtrations will be important. We call a subset $L \subseteq \I$ a \emph{line} if $u \not \preceq v$ for all $u,v \in L$ with $u \not = v$.
For any line $L$, we define $\F_L \defeq \sigma(\xi_u: L \not \preceq u)$.
The $\sigma$-algebra $\F_L$ contains all information about the individuals up to and including the line $L$ in the genealogical tree,
but is independent of all information that comes after that, i.e., after crossing the line $L$,
for instance, the relative birth times of individuals descended from an element of $L$. Of particular importance are the $\sigma$-algebras with $L = \N^n$, the $n$th generation,
\begin{equation}
\F_n \defeq	\F_{\N^n} = \sigma(\xi_u: |u| < n),	\qquad	n \in \N_0.
\end{equation}
The family $(\F_n)_{n \in \N_0}$ forms a filtration of $(\Omega,\F)$. We set $\F_\infty \defeq \sigma(\F_n: n \in \N)$.
The second filtration is the counterpart of the first
when the $n$th generation is replaced by the coming generation at time $t$.
To formally introduce it, we first recall the notion of an \emph{optional line}.
An optional line $\J\subseteq \I$ is a random line  with the property that, for every deterministic line $L \subseteq \I$,
it holds that $\{\J \preceq L\} \in \F_L$. For instance, for every $t \geq 0$, the coming generation at time $t$, $\I_t$,
is an optional line.
Indeed, for every line $L \subseteq \F_L$, we have
\begin{equation*}
\{\I_t \preceq L\} =   \bigcap_{v \in L} \{\I_t \preceq v\} = \bigcap_{v \in L} \{S(v) > t\} \in \F_L.
\end{equation*}
We follow Jagers on p.\;190 of \cite{Jagers:1989} and define, for an optional line $\J$,
\begin{equation*}
\F_\J \defeq \{A \in \F_\infty: A \cap \{\J \preceq L\} \in \F_L \text{ for all lines } L \subseteq \I\}.
\end{equation*}
A key result for us is the strong Markov branching property at optional lines, Theorem 4.14 in \cite{Jagers:1989}.
Finally, for $t \geq 0$, we set $\H_t \defeq \F_{\I_t}$.

\subsection{Basic assumptions}

Let $\mu(\cdot) \defeq \E[\xi(\cdot)]$ be the intensity measure of the reproduction point processes.
It is a measure on $\B((0,\infty))$. Throughout the paper, we assume that $\mu$ is not concentrated on any lattice $h\N_0$, $h>0$.
This assumption is for convenience only, all results have lattice counterparts.
We define
\begin{equation}	\label{eq:m}
m(\theta) \defeq	\int \limits_{(0,\infty)} \! e^{-\theta t} \, \mu(\dt),	\quad	\theta \geq 0.
\end{equation}
The function $m$ is the Laplace transform of the intensity measure $\mu$.

Consider the following assumptions:
\begin{enumerate}[{(A}1)]
	\item
		The process is \emph{supercritical}, i.e., $\E[N] = \mu((0,\infty)) > 1$.
	\item
		There exists a \emph{Malthusian parameter} $\alpha > 0$, i.e., an $\alpha > 0$ satisfying
		\begin{equation}	\label{eq:Malthusian alpha}
		m(\alpha) = \int \limits_{(0,\infty)} \! e^{-\alpha t} \, \mu(\dt) = 1.
		\end{equation}
	\item
		The (right) derivative at $\theta = \alpha$ of the Laplace transform $m$ is finite, i.e.,
		\begin{equation}	\label{eq:-m'(alpha)<infty}
		m'(\alpha) \defeq -\int \limits_{(0,\infty)} \! t e^{-\alpha t} \, \mu(\dt) \in (-\infty,0).
		\end{equation}
	\item
		The (random) Laplace transform of $\xi$ at $\theta=\alpha$ has positive and finite variance, i.e.,
		\begin{equation}	\label{eq:E[Z_1^2]<infty}
		0 < \sigma^2 \defeq \E\bigg[\bigg(\sum_{k=1}^{N} e^{-\alpha X_k} \!-\! 1 \bigg)^{\!\!2} \bigg]
		= \E\bigg[\bigg(\int \limits_{(0,\infty)} \! e^{-\alpha t} \, \xi(\dt) \!-\! 1\bigg)^{\!\!2} \bigg] < \infty.
		\end{equation}
	\item
		There exists a nonincreasing Lebesgue integrable function $g:[0,\infty) \to (0,\infty)$ such that
		\begin{equation}	\label{eq:Nerman's Cond 5.1}
		\E\bigg[\sup_{t\geq 0} \frac{\sum_{k=1}^{N} e^{-\alpha X_k} \1_{[0,X_k)}(t)}{g(t)}\bigg]<\infty.
		\end{equation}
\end{enumerate}
Throughout the paper, we shall assume that (A1) through (A3) hold.
Assumption (A1) guarantees that the survival set $\Surv$ defined by
\begin{equation}	\label{eq:survival set}
\Surv = \{\Gen_n \not = \varnothing \text{ for all } n \in \N_0\}
\end{equation}
satisfies $\Prob(\Surv) > 0$.
While assumption (A4) is required for the central limit theorem for Nerman's martingale,
(A5) is additionally used in the proof of the law of the iterated logarithm.

\subsection{Nerman's martingale}\label{sect:Nerman}

Recall that $\I_t$ is the coming generation at time $t$, i.e.,
the collection of labels of individuals born after time $t$
whose parents were born up to (and including) time $t$.
Put
\begin{equation}	\label{eq:Nerman's martingale}
W_t \defeq \sum_{u \in \I_t} e^{-\alpha S(u)},	\quad	t \geq 0.
\end{equation}
The family $(W_t, \H_t)_{t \geq 0}$
is a nonnegative martingale (Proposition 2.4 in \cite{Nerman:1981}), called Nerman's martingale.
It converges almost surely (a.\,s.) as $t \to \infty$ to a finite limit $W \geq 0$ (Corollary 2.5 in \cite{Nerman:1981}).
For later use, we stipulate $W_t \defeq 1$ for $t<0$.
The martingale is a pure jump process almost surely taking values  in the Skorokhod space $D(\R)$
of right-continuous real-valued functions with left limits at every point (c\`{a}dl\`{a}g functions).
With probability one, there are only finitely many jumps on every given compact set.
Indeed, the martingale jumps at $t \geq 0$ only if, for some $n\in\N$, there are $u_1,\ldots,u_n \in \Gen$
and $S(u_j)=t$ for $j=1,\ldots,n$. In this case
\begin{equation*}
\Delta W_t \defeq W_t - W_{t-} = \sum_{j=1}^n e^{-\alpha S(u_j)} \bigg(\sum_{k \geq 1} e^{-\alpha X_k(u_j)} - 1\bigg).
\end{equation*}
Almost surely, there are only finitely many $u$ with $S(u) \leq t$ for any $t \geq 0$. Indeed,
by the many-to-one lemma \cite[Section 1.3]{Shi:2015},
there is a zero-delayed random walk $(\mathsf{S}_n)_{n \in \N_0}$ on $\R$ with increment law $\Prob(\mathsf{S}_n-\mathsf{S}_{n-1} \in \dx) = e^{-\alpha x} \mu(\dx)$,
$\E[\mathsf{S}_1]=-m'(\alpha)>0$ and
\begin{align*}
\E[N_t]
&= \sum_{n \geq 0} \E \bigg[ \sum_{|u|=n} \1_{\{S(u) \leq t\}}\bigg]	
\leq e^{\alpha t} \sum_{n \geq 0} \E \bigg[ \sum_{|u|=n} e^{-\alpha S(u)} \1_{\{S(u) \leq t\}}\bigg]\\
&= e^{\alpha t} \sum_{n \geq 0} \Prob\big(\mathsf S_n\in (0,t]\big)
<\infty.
\end{align*}
The process $(N_t)_{t \geq 0}$ is also a natural example
of a process the limit of which is given by a constant multiple of Nerman's martingale.
Indeed, if (A1) through (A5) hold ((A4) is not needed), Theorem 5.4 of \cite{Nerman:1981}
implies that $e^{-\alpha t} N_t \to \frac{1}{-\alpha m'(\alpha)} W$ a.\,s.\ as $t \to \infty$.

\subsection{The connection with Biggins' martingale}
Nerman's martingale is related to the corresponding Biggins martingale $(Z_n, \F_n)_{n \in \N_0}$, where
\begin{equation}
Z_n = \sum_{|u|=n} e^{-\alpha S(u)},	\quad	n \in \N_0.
\end{equation}
Since the Biggins martingale is nonnegative,
it converges a.\,s.\  to a finite limit $Z \geq 0$ with $\E[Z] \leq 1$.
Further, it holds that $W = Z$ a.\,s.\ by Theorem 3.3 in \cite{Gatzouras:2000}.

What is more, if (A1) through (A4) hold, then
\begin{equation*}
\frac{m(2\alpha)}{m(\alpha)^2} = m(2\alpha) < 1.
\end{equation*}
and in view of Theorem 1 in \cite{Biggins:1992} or Theorem 2.1 in \cite{Liu:2000} it holds $Z_n \to Z$ in $L^2$. In particular, $\E[W]=\E[Z]=1$ and $\Var[W]=\Var[Z] = \E[(Z-1)^2] < \infty$.
Using the fact that martingale increments are uncorrelated, we calculate $\Var[W]$ as follows:
\begin{align}
\sigma_W^2
&\defeq \Var[W] = \E[(Z-1)^2] = \sum_{n\geq 0}\E[(Z_{n+1}-Z_n)^2]	\notag	\\
&= \sum_{n\geq 0}\E\bigg[ \sum_{|u|=n} e^{-2\alpha S(u)}\bigg] \E[(Z_1-1)^2]
= \frac{\sigma^2}{1-m(2\alpha)},	\label{eq:sigma_alpha^2}
\end{align}
where $\sigma^2=\E[(Z_1-1)^2]$.

\subsection{Fluctuations of Nerman's martingale}	\label{subsec:fluctuations of Nerman's martingale}

Recall the notation $\F_\infty\defeq \sigma(\F_n: n \in \N_0)$ and denote by $\Law(X)$ the law (distribution) of a random variable $X$.
If $X,X_t$, $t \geq 0$ are real-valued random variables,
we write
\begin{equation}    \label{eq:convergence weakly in probability}
\Law(X_t | \H_t) \weaklyto \Law(X | \F_{\infty})    \quad   \text{in $\Prob$-probability as $t\to\infty$}
\end{equation}
(in words, `the distribution of $X_t$ given $\H_t$ converges weakly to the distribution of $X$ given $\F_\infty$ in $\Prob$-probability')
if $\E[f(X_t)|\H_t] \Probto \E[f(X)|\F_\infty]$ as $t \to \infty$ for every bounded continuous function $f:\R \to \R$.
Notice that \eqref{eq:convergence weakly in probability} implies $X_t \distto X$ as $t\to\infty$,
where $\distto$ denotes convergence in distribution in $\R$.

Theorem \ref{Thm:CLT for Nerman's martingale} gives the asymptotic fluctuations of $(W_t)_{t \geq 0}$.
\begin{theorem}	\label{Thm:CLT for Nerman's martingale}
Suppose that (A1) through (A4) hold.
Then
\begin{align}	\label{eq:W_t 1-dim convergence}
\Law\big(e^{\alpha t/2} \big(W-W_t\big) \, \big| \, \H_t \big)
\weaklyto	\Law\Big( \Big(\frac{\sigma^2}{-\alpha m'(\alpha)} W\Big)^{\!1/2} \, \cdot X \, \Big| \, \F_\infty \Big)
\end{align}
in probability as $t \to \infty$
where $X$ is standard normal and independent of $\F_\infty$.
\end{theorem}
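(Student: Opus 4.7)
The plan is to use the strong Markov branching property at the optional line $\I_t$ to decompose $W-W_t$ into a sum of independent contributions, and then apply a conditional Lindeberg--Feller CLT.

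For each $u \in \I_t$, let $W^{(u)}$ be the Nerman martingale limit of the subtree rooted at $u$. Letting $s\to\infty$ in $L^2$ in the identity $W_s = \sum_{u \in \I_t} e^{-\alpha S(u)} W^{(u)}_{s-S(u)}$ (using the convention $W_r^{(u)}=1$ for $r<0$) produces the key representation
\begin{equation*}
W - W_t = \sum_{u \in \I_t} e^{-\alpha S(u)}\bigl(W^{(u)} - 1\bigr),
\end{equation*}
where, conditionally on $\H_t$, the family $(W^{(u)})_{u \in \I_t}$ is i.i.d.\ with the law of $W$, centered, with variance $\sigma_W^2=\sigma^2/(1-m(2\alpha))$, and independent of $\H_t$. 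Given $\H_t$, $e^{\alpha t/2}(W-W_t)$ is therefore a sum of independent centered terms with conditional variance $\sigma_W^2\,e^{\alpha t}\sum_{u\in\I_t} e^{-2\alpha S(u)}$, and the characteristic-function identity
\begin{equation*}
\E\bigl[e^{i\theta e^{\alpha t/2}(W-W_t)} \bigm| \H_t\bigr] = \prod_{u \in \I_t} \phi_W\!\bigl(\theta e^{\alpha(t/2-S(u))}\bigr), \qquad \phi_W(s)=1-\tfrac12\sigma_W^2 s^2+o(s^2),
\end{equation*}
combined with the uniform bound $e^{\alpha(t/2-S(u))}\le e^{-\alpha t/2}$ for $u\in\I_t$, reduces via Taylor expansion and individual-term negligibility to the target Gaussian characteristic function $\exp(-\tfrac12\theta^2 V)$ with $V=\frac{\sigma^2}{-\alpha m'(\alpha)}W$, provided
\begin{equation*}
e^{\alpha t} \sum_{u \in \I_t} e^{-2\alpha S(u)} \;\Probto\; \frac{1-m(2\alpha)}{-\alpha m'(\alpha)}\,W.
\end{equation*}

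The main obstacle is this variance convergence. Writing each $u \in \I_t$ as $u=vk$ with parent $v\in\Gen$ satisfying $S(v)\le t<S(v)+X_k(v)$ recasts the left-hand side as $e^{-\alpha t} Z^{\hat\Phi}_t$, where $Z^{\hat\Phi}_t = \sum_{v\in\Gen}\hat\Phi_v(t-S(v))$ is the general branching process counted with the random characteristic
\begin{equation*}
\hat\Phi_v(r) = \1_{[0,\infty)}(r) \sum_{k: X_k(v)>r} e^{-2\alpha(X_k(v)-r)}.
\end{equation*}
A single-ancestor many-to-one computation yields $\E\bigl[\int_0^\infty e^{-\alpha r}\hat\Phi(r)\,\mathrm{d}r\bigr] = (1-m(2\alpha))/\alpha$, finite under (A4), and Nerman's SLLN would immediately deliver the desired constant. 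However, Nerman's SLLN as stated relies on condition (A5), which is not assumed for this theorem, so the technical heart of the proof is to bypass (A5). I would do so by an $L^2$-argument: compute $\E[e^{-\alpha t}Z^{\hat\Phi}_t]$ via the many-to-one lemma plus the key renewal theorem, and bound $\Var(e^{-\alpha t}Z^{\hat\Phi}_t)$ via a two-ancestor many-to-one, exploiting $m(2\alpha)<1$ (a consequence of (A4), already noted in Section~2.4) to ensure that the second moment is summable and that the variance vanishes.

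Once the variance convergence is in hand, the Lindeberg condition is essentially free: the truncation threshold $\epsilon e^{\alpha S(u)-\alpha t/2}\ge \epsilon e^{\alpha t/2}$ tends to infinity uniformly in $u\in\I_t$, so $\E[(W-1)^2\1_{\{|W-1|>\epsilon e^{\alpha t/2}\}}] \to 0$ by dominated convergence ($W\in L^2$), and multiplication by the $O_{\Prob}(1)$ prefactor $e^{\alpha t}\sum_u e^{-2\alpha S(u)}$ verifies the Lindeberg condition and completes the proof.
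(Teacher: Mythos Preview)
Your overall architecture matches the paper's proof exactly: the decomposition of $W-W_t$ over the optional line $\I_t$, the identification of the conditional variance as $\sigma_W^2\,e^{\alpha t}\sum_{u\in\I_t}e^{-2\alpha S(u)}$, the verification of the Lindeberg condition via $\E[(W-1)^2\1_{\{|W-1|>\epsilon e^{\alpha t/2}\}}]\to 0$, and the conditional CLT. The paper phrases the final step as checking conditions (2.5)--(2.7) of Helland (1982) rather than Taylor-expanding the characteristic function, but this is cosmetic.

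Where you diverge is in the variance convergence, and here you have made life harder than necessary. You write that ``Nerman's SLLN as stated relies on condition (A5)'' and propose a bespoke $L^2$ argument to bypass it. But Nerman's 1981 paper contains both a weak law (his Theorem~3.1, convergence in probability) and a strong law (his Theorem~5.4, almost sure convergence); only the \emph{strong} law needs the integrability condition (A5). Theorem~3.1 requires merely Nerman's Condition~(3.2) and direct Riemann integrability of $t\mapsto\E[e^{-\alpha t}\phi(t)]$, both of which are easy for the characteristic $\phi(t)=e^{2\alpha t}\sum_k e^{-2\alpha X_k}\1_{[0,X_k)}(t)$ under (A1)--(A3) alone. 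This is precisely what the paper does in its Lemma~3.1, obtaining $e^{\alpha t}\sum_{u\in\I_t}e^{-2\alpha S(u)}\Probto \frac{1-m(2\alpha)}{-\alpha m'(\alpha)}W$ directly from Nerman's weak law (and then upgrading to $L^1$ via Nerman's Corollary~3.3, since (A4) implies $\E[Z_1\log^+Z_1]<\infty$). Your proposed two-ancestor many-to-one $L^2$ bound is a legitimate alternative and would likely succeed, but it is considerably more work than simply citing the correct theorem from Nerman.
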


Convergence in distribution of $e^{\alpha t/2} (W-W_t)$ as $t \to \infty$ can be strengthened
to convergence in distribution of the stochastic process $(e^{\alpha t/2} (W-W_{t+s}))_{s \in \R}$ as $t \to \infty$
in the Skorokhod space $D(\R)$, equipped with the $J_1$-topology (Chapter 12 in \cite{Billingsley:1999}).
We write `$\Rightarrow$' to denote convergence in distribution of random elements in this space.

\begin{theorem}	\label{Thm:FCLT for Nerman's martingale}
Suppose that (A1) through (A4) hold.
Then
\begin{align}	\label{eq:FCLT for Nerman's martingale}
(e^{\alpha t/2} (W-W_{t+s}))_{s \in \R}
\Rightarrow	\Big(\Big(\frac{\sigma^2}{-\alpha m'(\alpha)} W\Big)^{\!1/2} B_{e^{-\alpha s}}\Big)_{\!\!s \in \R}
\quad	\text{as }	t \to \infty
\end{align}
where $(B_s)_{s \geq 0}$ is a standard Brownian motion independent of $W$.
\end{theorem}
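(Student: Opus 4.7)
The plan is to upgrade the one-dimensional convergence of Theorem \ref{Thm:CLT for Nerman's martingale} to functional convergence in $D(\R)$ by the standard two-step approach: prove convergence of the finite-dimensional distributions and then establish tightness on each compact subinterval, using that convergence in $D(\R)$ reduces to convergence in $D([-n,n])$ for every $n \in \N$.

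For the finite-dimensional distributions at points $s_1 < \dots < s_k$, I would fix $a \leq s_1$ and use the strong Markov branching property at the optional line $\I_{t+a}$ to obtain the decomposition
\[
e^{\alpha t/2}\bigl(W - W_{t+s_j}\bigr) \;=\; \sum_{u \in \I_{t+a}} e^{\alpha t/2 - \alpha S(u)} \bigl(W^{(u)} - W^{(u)}_{t+s_j-S(u)}\bigr),
\]
where, conditionally on $\H_{t+a}$, the processes $(W^{(u)}_r)_{r \in \R}$ with $u \in \I_{t+a}$ are i.i.d.\ copies of Nerman's martingale. The vector $\bigl(e^{\alpha t/2}(W - W_{t+s_j})\bigr)_{j=1}^k$ is then a conditional sum of i.i.d.\ random vectors in $\R^k$, to which a multivariate conditional Lindeberg CLT applies. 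By the martingale property of $W_\cdot$, the off-diagonal entries of the conditional covariance collapse to variances, so one only needs to identify the diagonal: the target is $\Sigma_{jj} = \frac{\sigma^2 W}{-\alpha m'(\alpha)} e^{-\alpha s_j}$, which, combined with the collapse, produces the covariance $\Sigma_{ij} = \frac{\sigma^2 W}{-\alpha m'(\alpha)} e^{-\alpha(s_i \vee s_j)}$ of the claimed Gaussian limit. To identify $\Sigma_{jj}$, I would write $e^{\alpha t} \sum_{u \in \I_{t+a}} e^{-2\alpha S(u)} \Var(W-W_{t+s_j-S(u)})$ as a weighted characteristic sum $\sum_p e^{-\alpha S(p)} \psi^{(j)}_p(t-S(p))$ in Nerman's framework and invoke the almost sure limit theorem (Theorem 5.4 in \cite{Nerman:1981}); the factor $e^{-\alpha s_j}$ and the prefactor $\sigma^2/(-\alpha m'(\alpha))$ then emerge from the renewal-theoretic overshoot density for the many-to-one random walk, combined with $\sigma_W^2(1 - m(2\alpha)) = \sigma^2$. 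The Lindeberg condition reduces to the negligibility of individual summands of size $O(e^{-\alpha t/2})$, a direct consequence of $\sigma_W^2 < \infty$ guaranteed by (A4).

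For tightness on a compact interval $[a,b]$, I would use the splitting
\[
e^{\alpha t/2}(W - W_{t+s}) \;=\; e^{\alpha t/2}(W - W_{t+a}) - e^{\alpha t/2}(W_{t+s} - W_{t+a}), \qquad s \in [a,b],
\]
in which the first term is constant in $s$ and the second is a $(\H_{t+s})$-martingale started from $0$ at $s=a$. Tightness of the latter follows from a functional martingale central limit theorem (e.g.\ Theorem VIII.3.11 of Jacod and Shiryaev) once one verifies: (a) convergence of the predictable quadratic variation
\[
\sigma^2 e^{\alpha t} \!\!\sum_{u: \, t+a < S(u) \leq t+s} \!\! e^{-2\alpha S(u)} \;\Probto\; \frac{\sigma^2 W}{-\alpha m'(\alpha)} \bigl(e^{-\alpha a} - e^{-\alpha s}\bigr)
\]
uniformly in $s$ over $[a,b]$, proved by the same characteristic-sum argument as above, and (b) a conditional Lindeberg condition on the jumps, which again follows from (A4) since every jump of $e^{\alpha t/2} W_{t+s}$ has size of order $e^{-\alpha t/2}|W^{(u)}_0 - 1|$.

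The main obstacle is the correct treatment of the joint limit with $W$: the limit covariance depends randomly on $W$ through the tree, yet the claimed limit process involves a Brownian motion \emph{independent} of $W$. This ``stable'' form of the CLT requires proving the conditional (given $\H_t$, in probability) version of the above limit laws; the decomposition along the optional line $\I_{t+a}$ is well suited for this because, conditionally on $\H_{t+a}$, the summands are i.i.d.\ and the dependence on $W$ enters only through the limiting variance via the characteristic LLN. Combining this stable f.d.d.\ convergence with the tightness argument then yields the stated functional convergence in $D(\R)$.
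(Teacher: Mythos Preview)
Your overall two-step plan (finite-dimensional distributions via a conditional Lindeberg CLT along an optional line, then tightness) matches the paper's. A few points of comparison and one genuine gap:

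\textbf{A gap in the hypotheses.} You invoke Theorem~5.4 of \cite{Nerman:1981} (the almost sure LLN for characteristics) to identify the conditional covariances. That theorem requires Nerman's Condition~5.1, which in the paper's notation is assumption (A5), and (A5) is \emph{not} among the hypotheses of Theorem~\ref{Thm:FCLT for Nerman's martingale}. The paper works instead with the convergence-in-probability and $L^1$ versions of Nerman's theorem (its Lemma~\ref{Lem:asymptotics of W_t(2alpha)} and Corollary~\ref{Cor:asymptotics of W_t(2alpha)}), which require only (A1)--(A4). This suffices both for the conditional CLT (only convergence in probability of the conditional variances is needed) and for the tightness estimates. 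Your argument goes through once you replace the almost sure statement by the in-probability one.

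\textbf{A minor slip.} The summands along $\I_{t+a}$ are conditionally \emph{independent} but not i.i.d.: the quantity $W^{(u)}_{t+s_j-S(u)}$ depends on $S(u)$, which is $\H_{t+a}$-measurable but varies over $u$. This does not affect the Lindeberg argument.

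\textbf{Identifying the constant.} You propose a direct renewal-theoretic computation of the limiting variance. The paper avoids this via a short indirect argument (Lemma~\ref{Lem:Cov_t}): once one knows $\Var_t[e^{\alpha t/2}(W-W_{t+s})]$ converges in $L^1$ to $d_s W$ for \emph{some} constant $d_s$, taking unconditional expectations and comparing with the case $s=0$ already established in \eqref{eq:con var in L^1} forces $d_s = e^{-\alpha s}\sigma^2/(-\alpha m'(\alpha))$.

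\textbf{Tightness: a genuinely different route.} The paper does not use a martingale FCLT of Jacod--Shiryaev type. Instead it verifies Aldous's criterion (Theorem~16.10 of \cite{Billingsley:1999}) directly: for a discrete $(\H_{t+s})$-stopping time $\tau \leq b$ one shows $\I_{t+\tau}$ is an optional line, applies the strong Markov branching property there, and bounds $\Prob(e^{\alpha t/2}|W_{t+\tau+\delta}-W_{t+\tau}|\geq\varepsilon)$ by $\varepsilon^{-2} e^{\alpha b} c_\delta$ with $c_\delta$ as in Corollary~\ref{Cor:asymptotics of W_t(2alpha)}, and $c_\delta \downarrow 0$ as $\delta \downarrow 0$. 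Your Jacod--Shiryaev route is a legitimate alternative that packages f.d.d.\ convergence and tightness in one theorem; the paper's route is more elementary (only Chebyshev and the branching property) and makes the role of $c_\delta \to 0$ explicit. Either way, one first proves convergence on $D([0,\infty))$ and then extends to $D(\R)$ by shifting, as the paper does at the end of its proof.
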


Finally, we deal with the almost sure fluctuations of Nerman's martingale, namely,
we formulate a law of the iterated logarithm.
\begin{theorem}	\label{Thm:LIL}
If (A1) through (A5) hold, then, a.\,s.\  on the survival set $\Surv$,
\begin{align}
\limsup_{t\to\infty} \frac{e^{\alpha t/2}}{\sqrt{\log t}}(W-W_t)
&=\Big(\frac{2\sigma^2}{-\alpha m'(\alpha)} W\Big)^{\!1/2},	\label{eq:limsup LIL} \\
\liminf_{t\to\infty}\frac{e^{\alpha t/2}}{\sqrt{\log t}}(W-W_t)
&=-\Big(\frac{2\sigma^2}{-\alpha m'(\alpha)} W\Big)^{\!1/2}. 	\label{eq:liminf LIL}
\end{align}
\end{theorem}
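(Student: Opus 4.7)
The argument follows the template developed for Biggins' martingale in \cite{Iksanov+Kabluchko:2016}. Its cornerstone is the branching identity
\begin{equation*}
W - W_t = \sum_{u \in \I_t} e^{-\alpha S(u)}\bigl(\tilde{W}^{(u)} - 1\bigr),
\end{equation*}
where the $\tilde{W}^{(u)}$, $u \in \I_t$, are i.i.d.\ copies of $W$ independent of $\H_t$, obtained from the strong Markov branching property applied at the optional line $\I_t$. Thus, conditionally on $\H_t$, $W - W_t$ is a centred sum of independent random variables with conditional variance $\sigma_W^2\, V_t$, where $V_t := \sum_{u \in \I_t} e^{-2\alpha S(u)}$. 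A preliminary a.s.\ asymptotic for $V_t$, namely
\begin{equation*}
e^{\alpha t} V_t \longrightarrow \frac{1-m(2\alpha)}{-\alpha m'(\alpha)}\,W \quad \text{as } t \to \infty,
\end{equation*}
would be obtained from Nerman's strong law for characteristic-counted branching processes applied to an appropriate random characteristic, whose integrability is guaranteed by (A5). Combined with \eqref{eq:sigma_alpha^2}, this yields $\sigma_W^2 V_t \sim \frac{\sigma^2 W}{-\alpha m'(\alpha)}\, e^{-\alpha t}$, which matches the variance appearing in Theorem~\ref{Thm:FCLT for Nerman's martingale}.

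For the $\limsup$ upper bound in \eqref{eq:limsup LIL}, I would work along the subsequence $t_k := k$. On $\{W \leq K\}$, truncate each $\tilde{W}^{(u)} - 1$ at a level of order $e^{\alpha t_k/2}/\log t_k$; the $L^2$-moment of $W$ from (A4) renders the removed parts Borel-Cantelli negligible. A conditional Bernstein/Fuk-Nagaev inequality applied to the truncated, independent summands then gives, on $\{W \leq K\}$,
\begin{equation*}
\Prob\Bigl( e^{\alpha t_k/2}|W - W_{t_k}| \geq (1+\varepsilon)\bigl(\tfrac{2\sigma^2 W}{-\alpha m'(\alpha)}\bigr)^{1/2} \sqrt{\log t_k} \,\Bigm|\, \H_{t_k} \Bigr) \leq t_k^{-(1+\varepsilon)^2},
\end{equation*}
so Borel-Cantelli delivers the limsup bound along $(t_k)$ after letting $K \to \infty$ and $\varepsilon \to 0$. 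The fluctuation $\sup_{s \in [t_k, t_{k+1}]} |W_s - W_{t_k}|$ is handled by Doob's $L^2$ maximal inequality applied to $(W_s)$, whose contribution is of order $e^{-\alpha t_k/2}$ without the $\sqrt{\log t_k}$ amplification and therefore does not affect the normalization.

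The matching lower bound uses a conditional second Borel-Cantelli argument along a sparser subsequence $t_k$ with $t_{k+1} - t_k \to \infty$ (e.g.\ $t_k = k \log k$), applied to the increments $W_{t_{k+1}} - W_{t_k}$. By Theorem~\ref{Thm:FCLT for Nerman's martingale}, these increments satisfy a conditional CLT with variance $\sim \frac{\sigma^2 W}{-\alpha m'(\alpha)}(e^{-\alpha t_k} - e^{-\alpha t_{k+1}})$, and the events
\begin{equation*}
B_k := \Bigl\{ W_{t_{k+1}} - W_{t_k} \geq (1-\varepsilon)\bigl(\tfrac{2\sigma^2 W}{-\alpha m'(\alpha)}\bigr)^{1/2} e^{-\alpha t_k/2} \sqrt{\log t_k} \Bigr\}
\end{equation*}
have, on $\{W > w\} \cap \Surv$, conditional probabilities $\Prob(B_k\mid\H_{t_k})$ whose sum diverges a.s.; L\'evy's conditional Borel-Cantelli lemma then produces $B_k$ infinitely often. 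Combined with the upper bound---which renders $|W-W_{t_{k+1}}|$ negligible compared with $e^{-\alpha t_k/2}\sqrt{\log t_k}$ once $t_{k+1}-t_k\to\infty$---this yields the lower bound in \eqref{eq:limsup LIL}; \eqref{eq:liminf LIL} follows by applying the same argument to $-(W-W_t)$ and exploiting the symmetry of the Gaussian limit. The hardest step is expected to be the truncation/Bernstein argument in the upper bound: because $W$ has only $L^2$ control and not exponential moments, the truncation level has to be tuned delicately to balance the subgaussian estimate against the tail contribution, and this is precisely where the sharp a.s.\ asymptotics of $V_t$---and thereby the full strength of (A5)---becomes indispensable.
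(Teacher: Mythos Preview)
Your overall architecture is reasonable, but two steps contain genuine gaps.

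\textbf{Interpolation from lattice to continuous time.} Doob's $L^2$ maximal inequality yields only
\[
\Prob\Bigl(\sup_{s\in[k,k+1]}|W_s-W_k|\geq\varepsilon\, e^{-\alpha k/2}\sqrt{\log k}\Bigr)
\leq \frac{4\,\E[(W_{k+1}-W_k)^2]}{\varepsilon^2 e^{-\alpha k}\log k}
\leq \frac{C}{\varepsilon^2\log k},
\]
which is not summable in $k$; hence Borel--Cantelli does not apply, and the oscillation between integer times is \emph{not} negligible at the LIL scale---it itself fluctuates on the order $e^{-\alpha k/2}\sqrt{\log k}$. The paper deals with this by working on a lattice $\delta\N$ of arbitrary mesh, using a L\'evy-type optional-time reflection argument to bound the conditional probability that $\sup_{t\in[\delta n,\delta(n+1))}(W_{\delta n}-W_t)$ exceeds $\varepsilon_n$ by twice the conditional probability that $W_{\delta n}-W_{\delta(n+1)}$ exceeds $\varepsilon_n$, obtaining a limsup contribution of $(2c_\delta W)^{1/2}$, and only then sending $\delta\downarrow 0$ and invoking $c_\delta\to 0$ from Corollary~\ref{Cor:asymptotics of W_t(2alpha)}. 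A fixed mesh with Doob's inequality alone cannot close this step.

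\textbf{Lower bound.} Theorem~\ref{Thm:FCLT for Nerman's martingale} gives only distributional convergence, with no rate. To conclude $\sum_k\Prob(B_k\mid\H_{t_k})=\infty$ a.s.\ you need the Gaussian tail asymptotic $\Prob(Z>(1-\varepsilon)\sqrt{2\log t_k})\sim c\, t_k^{-(1-\varepsilon)^2}(\log t_k)^{-1/2}$ to transfer to the actual (non-Gaussian) increments, and that requires a Berry--Esseen bound or a moderate-deviation estimate; weak convergence says nothing about probabilities at thresholds tending to infinity. The paper avoids this by verifying directly the summable Berry--Esseen condition \eqref{eq:Berry-Esseen-type condition} along $\delta\N$ (via truncation at level $e^{-\alpha\delta n/2}e^{\alpha S(u)}$ and a many-to-one computation of the conditional third moments), after which Lemma~\ref{Lem:Asmussen+Hering} delivers the full $\limsup=\sqrt2$---upper \emph{and} lower bound---for the lattice sequence in one stroke; the identity $W-W_{\delta n}=(W-W_{\delta n+r})+(W_{\delta n+r}-W_{\delta n})$ with $r\to\infty$ then upgrades the limsup inequality to equality.
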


\begin{remark}	\label{Rem:LIL}
It can be checked that $\E[(W-W_t)^2] = \Var[W] \E \big[\sum_{u\in \I_t}e^{-2\alpha S(u)}\big]$.
Hence, by Lemma \ref{Lem:asymptotics of W_t(2alpha)} below, $e^{\alpha t}\E[(W-W_t)^2]$ converges as $t\to\infty$ to a
positive constant. Thus, $\log t$ in \eqref{eq:limsup LIL} and \eqref{eq:liminf LIL}
can be replaced by the asymptotically equivalent function
$\log\big(|\log \E[(W-W_t)^2]|\big)$. This demonstrates that Theorem \ref{Thm:LIL}
is indeed a law of the {\it iterated} logarithm.
\end{remark}

\section{Proofs of the main results}	\label{sec:Proofs}

We start this section with some basic notation and discussions.

\subsection{Preliminaries}	\label{subsec:preliminaries}

\subsubsection*{The shift operators.}
Suppose that $\psi$ is a function of $(\xi_v)_{v \in \I}$,
all offspring point processes. 
For a given $u \in \I$, we write $[\psi]_u$ for the very same function
but applied to $((\xi_{uv})_{v \in \I})$. In other words, $[\cdot]_u$ is a shift operator
that shifts the ancestor to $u$.
For instance, we have $[Z_1]_u = \sum_{|v|=1} e^{-\alpha X_v(u)}$.
Further, $[Z]_u = \lim_{n \to \infty} [Z_n]_u$ a.\,s.\  and $[W]_u = \lim_{t \to \infty} [W_t]_u$ a.\,s.\
are the limits of the shifted martingales. Here,
\begin{equation*}
[W_t]_u = \sum_{v \in [\I_t]_u} e^{-\alpha (S(uv) - S(u))}, \quad t \geq 0.
\end{equation*}
If $\psi$ is a function of a real variable $t$ and $(\xi_v)_{v \in \I}$,
i.e., $\psi_t = f(t,(\xi_v)_{v \in \I})$ for some function $f$, then we write
$[\psi_{\cdot}]_u \circ t$ for $f(t,(\xi_{uv})_{v \in \I})$.
This is particularly
useful when $t$ is replaced by a function of $S(u)$, for instance,
in this notation, we have $[\psi_{\cdot}]_u \circ (t-S(u)) = f(t-S(u),(\xi_{uv})_{v \in \I})$.

\subsubsection*{Crump-Mode-Jagers processes.}
For a function $\psi$ as above we may now define $\cZ^{\psi}_t$ \emph{the general branching process counted with a characteristic $\psi$}  (or the Crump-Mode-Jagers processes)  by setting
\begin{equation}
\cZ^{\psi}_t\defeq\sum_{u\in\I}[\psi_{\cdot}]_u \circ (t-S(u)).
\end{equation}
The classical Nerman's result states that, under suitable assumptions,
\begin{equation*}
\cZ^{\psi}_t\to\frac W{-m'(\alpha)}\times\int\E[\psi_t]e^{-\alpha t}\dt\quad\text{a.s.}
\end{equation*}

\subsubsection*{Recursive decomposition.}
With this notation, one deduces the following decomposition for $W_{t+r}$, valid for $t,r \geq 0$,
\begin{equation}	\label{eq:recursive representation for W_t+r}
W_{t+r} = \sum_{u \in \I_t} e^{-\alpha S(u)} [W_{r+t-\cdot}]_u \circ S(u).
\end{equation}
Passing to the limit as $r \to \infty$, we infer (after a careful inspection, see Section 14 in \cite{Biggins+Kyprianou:2004}
or Lemma 4.2 in \cite{Alsmeyer+Kuhlbusch:2010})
\begin{align}	\label{eq:FPE for W}
W = \sum_{u \in \I_t} e^{-\alpha S(u)} [W]_u	\quad	\text{a.\,s.}
\end{align}
if (A1) through (A3) hold and $\E[Z_1 \log^+ Z_1] < \infty$.

\subsubsection*{Nerman's martingale as an $L^2$-martingale.}
If (A1) through (A4) hold, then, according to the discussion preceding \eqref{eq:sigma_alpha^2},
$W = Z \in L^2$. Further, $W_t = \E[W | \H_t]$ a.\,s.\ for all $t \geq 0$,
i.e., $(W_t)_{t \geq 0}$ is an $L^2$-bounded martingale and hence convergent in $L^2$ (with limit $W$, of course).
We write $v_t \defeq \Var[W_t] = \E[(W_t-1)^2]$ for the variance of $W_t$, $t \in \R$.
Since $((W_t-1)^2)_{t \geq 0}$ is a right-continuous submartingale,
the function $t \mapsto v_t$ is nondecreasing and right-continuous.
We now identify $v_t$ for some relevant values of $t$.
Trivially, $v_t = 0$ for $t<0$.
Further, since $W_0 = Z_1$, we have $v_0 = \Var[Z_1] = \sigma^2$.
From $W_t \to W$ in $L^2$ and \eqref{eq:sigma_alpha^2} we finally deduce
\begin{equation}	\label{eq:v_infty}
v_\infty \defeq \lim_{t \to \infty} \E[(W_t-1)^2] = \E[(W-1)^2] = \sigma_W^2 = \tfrac{\sigma^2}{1-m(2\alpha)}.
\end{equation}
Moreover, Doob's maximal $L^2$-inequality (with $p=2$)
gives
\begin{equation}	\label{eq:Doob's max L^2 W_t}
\E \bigg[\sup_{s\ge 0} (W_s-1)^2 \bigg] \leq 4 \E[(W-1)^2],
\end{equation}
 i.e., $M_t \defeq \sup_{0 \leq s \leq t} |W_s-1| \in L^2$.

\subsection{Fluctuations of Nerman's martingale: proofs}

We start with an auxiliary result derived from Nerman's law of large numbers for the general branching process.

\begin{lemma}	\label{Lem:asymptotics of W_t(2alpha)}
Suppose that (A1) through (A3) hold.
Then
\begin{align}	\label{eq:asymptotics of W_t(2alpha)}
e^{\alpha t} \sum_{u \in \I_t} e^{-2\alpha S(u)}
&\Probto \frac{1-m(2\alpha)}{-\alpha m'(\alpha)} W	\quad	\text{as } t \to \infty.
\end{align}
More generally, if $f \in D(\R)$ is a nonnegative bounded function, then, as $t \to \infty$,
\begin{align}	\label{eq:asymptotics of W_t(2alpha) generalized}
e^{\alpha t} \sum_{u \in \I_t} \! e^{-2\alpha S(u)} f(t\!-\!S(u))
&\Probto \frac{W}{-m'(\alpha)} \E\bigg[\sum_{k=1}^{N} e^{-2\alpha X_k} \!\! \int_0^{X_k} \!\!\! e^{\alpha x} f(x\!-\!X_k) \, \dx \bigg].
\end{align}
The convergence in \eqref{eq:asymptotics of W_t(2alpha) generalized} (and thus also in \eqref{eq:asymptotics of W_t(2alpha)})
holds in the stronger
\begin{itemize}	\itemsep0pt
	\item	$L^1$ sense if $\E[Z_1 \log^+Z_1]<\infty$,
	\item	almost sure sense if (A5) holds.
\end{itemize}
\end{lemma}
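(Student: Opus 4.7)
The plan is to reduce the statement to Nerman's classical law of large numbers for general branching processes counted with a random characteristic. First I would select a characteristic $\phi$ for which the left-hand side of \eqref{eq:asymptotics of W_t(2alpha) generalized} is literally $e^{-\alpha t}\cZ^{\phi}_t$. The natural candidate is
\begin{equation*}
\phi(s) \defeq e^{2\alpha s} \sum_{k=1}^{N} e^{-2\alpha X_k}\, f(s - X_k)\, \1_{[0, X_k)}(s), \qquad s \in \R.
\end{equation*}
Decomposing each $u \in \I_t$ as $u = vk$ with parent $v \in \Gen$ satisfying $S(v) \leq t < S(vk)$ and setting $s = t - S(v) \geq 0$, a direct regrouping of exponential factors shows that $\cZ^{\phi}_t = e^{2\alpha t} \sum_{u \in \I_t} e^{-2\alpha S(u)} f(t-S(u))$, that is,
\begin{equation*}
e^{\alpha t} \sum_{u \in \I_t} e^{-2\alpha S(u)}\, f(t - S(u)) = e^{-\alpha t}\, \cZ^{\phi}_t.
\end{equation*}

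Next, I would invoke the appropriate mode of Nerman's law of large numbers for $\phi$ to obtain
\begin{equation*}
e^{-\alpha t}\, \cZ^{\phi}_t \longrightarrow \frac{W}{-m'(\alpha)} \int_0^{\infty} e^{-\alpha t}\, \E[\phi(t)]\, \dt
\end{equation*}
in $\Prob$-probability under (A1)--(A3), in $L^1$ if moreover $\E[Z_1 \log^+ Z_1] < \infty$ (the standard condition for $Z_n \to Z$ in $L^1$), and almost surely under (A5). A Fubini computation rewrites the integral as
\begin{equation*}
\int_0^{\infty} e^{-\alpha t}\, \E[\phi(t)]\, \dt = \E\Bigg[\sum_{k=1}^{N} e^{-2\alpha X_k} \int_0^{X_k} e^{\alpha x}\, f(x - X_k)\, \dx\Bigg],
\end{equation*}
which matches the constant appearing in \eqref{eq:asymptotics of W_t(2alpha) generalized}. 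Specializing to $f \equiv 1$, I would evaluate $\int_0^{X_k} e^{\alpha x}\,\dx = (e^{\alpha X_k} - 1)/\alpha$ and sum to obtain $(m(\alpha) - m(2\alpha))/\alpha = (1 - m(2\alpha))/\alpha$ inside the expectation, giving \eqref{eq:asymptotics of W_t(2alpha)}.

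The main technical obstacle is verifying the hypotheses of Nerman's theorem for this $\phi$, especially for the a.s.\ mode. Using $e^{-2\alpha(X_k - s)} \leq e^{-\alpha(X_k - s)}$ on the support $0 \leq s \leq X_k$, one has the pathwise bound
\begin{equation*}
|\phi(s)| \leq \|f\|_{\infty}\, e^{\alpha s} \sum_{k=1}^{N} e^{-\alpha X_k}\, \1_{[0, X_k)}(s),
\end{equation*}
so that $e^{-\alpha s}|\phi(s)|$ is controlled by $\|f\|_{\infty}$ times the very quantity featured in (A5). Combined with the fact that $\phi$ has compact (random) support $[0, \max_k X_k]$ and that $\int_0^{\infty} e^{-\alpha t}\, \E[\phi(t)]\, \dt$ is finite (by the computation above together with $m(2\alpha) < 1$), this should transfer assumption (A5) into the domination required by Nerman's a.s.\ theorem. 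The corresponding $L^1$ and in-probability modes demand only the weaker hypotheses, which are automatic under (A1)--(A3) and under $\E[Z_1\log^+Z_1]<\infty$, respectively.
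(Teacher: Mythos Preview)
Your proposal is correct and follows essentially the same route as the paper: the paper defines the identical characteristic $\phi(s)=e^{2\alpha s}\sum_{k=1}^{N}e^{-2\alpha X_k}\1_{[0,X_k)}(s)f(s-X_k)$, identifies the left-hand side as $e^{-\alpha t}\cZ^{\phi}_t$, and then invokes Nerman's Theorems~3.1, Corollary~3.3 and Theorem~5.4 for the three modes of convergence, using exactly your pathwise bound $e^{-\alpha s}|\phi(s)|\leq\|f\|_\infty\sum_k e^{-\alpha X_k}\1_{[0,X_k)}(s)$ to transfer (A5). The only additions in the paper are the explicit verifications of Nerman's side conditions (Condition~(3.2) and direct Riemann integrability of $t\mapsto\E[e^{-\alpha t}\phi(t)]$), which you correctly anticipate but leave implicit.
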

\begin{proof}
Define
\begin{equation*}
\phi(t) \defeq e^{2\alpha t} \sum_{k=1}^{N} e^{-2\alpha X_k} \1_{[0,X_k)}(t) f(t-X_k),	\quad	t  \in \R
\end{equation*}
and notice that, for any $t \geq 0$, with $\|f\|_\infty \defeq \sup_{x \in \R} f(x)$,
\begin{align}	\label{eq:Nerman's Condition (3.2)}
\E\bigg[ \sup_{s \leq t} \phi(s)\bigg]
&\leq e^{2\alpha t} \E\bigg[\sum_{k=1}^{N} e^{-2\alpha X_k} \bigg] \|f\|_\infty = e^{2\alpha t} m(2\alpha) \|f\|_\infty  < \infty,
\end{align}
that is, Condition (3.2) in \cite{Nerman:1981} holds.
Further, for $t \geq 0$,
\begin{align*}
\E[e^{-\alpha t} \phi(t)] = e^{\alpha t} \E\bigg[\sum_{k=1}^{N} e^{-2\alpha X_k} \1_{[0,X_k)}(t) f(t-X_k)\bigg]
= e^{\alpha t} \!\! \int \limits_{(t,\infty)} e^{-2\alpha x} f(t-x) \, \mu(\dx)
\end{align*}
is c\`adl\`ag as a function of $t$ by the dominated convergence theorem.
On the other hand, the inequality
\begin{align*}
\E[e^{-\alpha t} \phi(t)]
&= e^{\alpha t} \E\bigg[\sum_{k=1}^{N} e^{-2\alpha X_k} \1_{\{X_k > t\}} f(t-X_k) \bigg]	\\
&\leq \|f\|_\infty \, \E\bigg[\sum_{k=1}^{N} e^{-\alpha X_k} \1_{\{X_k > t\}} \! \bigg]
= \|f\|_\infty \int \limits_{(t,\infty)} \! e^{-\alpha x} \mu(\dx)
\end{align*}
together with \eqref{eq:-m'(alpha)<infty} shows that the nonnegative c\`adl\`ag function $t\mapsto \E[e^{-\alpha t} \phi(t)]$
is bounded from above by a directly Riemann integrable function on $[0,\infty)$,
hence it is directly Riemann integrable on $[0,\infty)$.
Consequently, the assumptions of Theorem 3.1 in \cite{Nerman:1981}
are satisfied.
The cited theorem gives that, as $t\to\infty$,
\begin{align}
e^{\alpha t} \sum_{u \in \I_t} e^{-2\alpha S(u)}f(t-S(u))
&= e^{-\alpha t} \sum_{u \in \Gen} [\phi]_u(t-S(u))
= e^{-\alpha t} \cZ_t^{\phi}	\notag	\\
&\Probto \frac{W}{-m'(\alpha)} \E\bigg[\sum_{k=1}^{N} e^{-2 \alpha X_k} \int_0^{X_k} e^{\alpha x} f(x-X_k) \, \dx \bigg].	\label{eq:asymptotics of Z_t^phi}
\end{align}
In the special case $f=1$, we find
\begin{align}
e^{\alpha t} \sum_{u \in \I_t} e^{-2\alpha S(u)}
&\Probto \frac{W}{-m'(\alpha)} \E\bigg[\sum_{k=1}^{N} e^{-2 \alpha X_k} \int_0^{X_k} e^{\alpha x} \, \dx \bigg]
= \frac{1-m(2\alpha)}{-\alpha m'(\alpha)} W.	\label{eq:asymptotics of W_t(2alpha)2}
\end{align}
As for the $L^1$-convergence, use Corollary 3.3 in \cite{Nerman:1981}.

Finally, we pass to the a.s.\ convergence.
Since $\phi$ is c\`adl\`ag and
\begin{equation*}
e^{-\alpha t}\phi(t)
= e^{\alpha t} \sum_{k=1}^{N} e^{-2\alpha X_k} \1_{[0,X_k)}(t) f(t-X_k)
\leq \|f\|_\infty \, \sum_{k=1}^{N} e^{-\alpha X_k} \1_{[0,X_k)}(t),
\end{equation*}
condition \eqref{eq:Nerman's Cond 5.1}, which is Condition 5.1 from \cite{Nerman:1981},
entails
\begin{equation*}
\E\bigg[\sup_{t \geq 0} \frac{e^{-\alpha t}\phi(t)}{g(t)}\bigg]<\infty.
\end{equation*}
This is
Condition 5.2 from \cite{Nerman:1981} with the particular $\phi$
and $h=g$. Thus, the assumptions of Theorem 5.4 in
\cite{Nerman:1981} are satisfied.
According to this theorem, the convergence in \eqref{eq:asymptotics of W_t(2alpha)} holds also in the almost sure sense.
\end{proof}

The lemma has the following corollary,
which we use in the proofs of Theorems \ref{Thm:FCLT for Nerman's martingale} and \ref{Thm:LIL},
but not in the proof of Theorem \ref{Thm:CLT for Nerman's martingale}.

\begin{corollary}	\label{Cor:asymptotics of W_t(2alpha)}
Suppose that (A1) through (A4) hold.
Then, for any fixed $\delta > 0$, as $t \to \infty$,
\begin{align}	\label{eq:asymptotics of W_t(2alpha) generalized c_delta}
e^{\alpha t} \sum_{u \in \I_t} \! e^{-2\alpha S(u)} v_{\delta + t - S(u)}
&\to c_\delta W	\quad	\text{in } L^1
\end{align}
where
\begin{equation}	\label{eq:c_delta}
c_\delta \defeq \frac{1}{-m'(\alpha)} \E\bigg[\sum_{k=1}^{N} e^{-2\alpha X_k} \!\! \int_0^{X_k} \!\!\! e^{\alpha x} v_{\delta + x - X_k} \, \dx \bigg].
\end{equation}
The convergence in \eqref{eq:asymptotics of W_t(2alpha) generalized c_delta}
holds in the stronger almost sure sense if (A5) holds.
Further, $c_\delta$ is nondecreasing as a function of $\delta$ with $c_\delta > 0$ for every $\delta > 0$.
The limits of $c_\delta$ as $\delta \to 0$ and $\delta \to \infty$ are given by
\begin{equation}	\label{eq:limits of c_delta}
\lim_{\delta \downarrow 0} c_\delta = 0
\quad	\text{and}	\quad
c_\infty \defeq \lim_{\delta \uparrow \infty} c_\delta = \frac{\sigma^2}{-\alpha m'(\alpha)}.
\end{equation}
\end{corollary}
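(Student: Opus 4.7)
The plan is to deduce the entire statement by specializing Lemma \ref{Lem:asymptotics of W_t(2alpha)} to the function $f(x) \defeq v_{\delta+x}$, and then to extract the claimed monotonicity and limits of $c_\delta$ from elementary integral manipulations.

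First I would observe that $f$ satisfies the hypotheses of Lemma \ref{Lem:asymptotics of W_t(2alpha)}: since $v_\cdot$ is nondecreasing and right-continuous by the submartingale property of $((W_t-1)^2)_{t \geq 0}$, so is $f$; it is nonnegative, and bounded by $v_\infty = \sigma_W^2 < \infty$ thanks to \eqref{eq:v_infty}. Hence \eqref{eq:asymptotics of W_t(2alpha) generalized} gives convergence in probability of $e^{\alpha t}\sum_{u \in \I_t} e^{-2\alpha S(u)} v_{\delta+t-S(u)}$ to $c_\delta W$, where $c_\delta$ is exactly \eqref{eq:c_delta}. Under (A4) one has $\E[Z_1^2]<\infty$, and the elementary bound $x\log^+ x \leq x^2$ yields $\E[Z_1\log^+ Z_1]<\infty$; the $L^1$-clause of the lemma then upgrades the convergence to $L^1$. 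Under the additional assumption (A5), the a.s.-clause of the lemma gives the stronger a.s.\ convergence.

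For the qualitative properties of $c_\delta$, monotonicity is immediate from the monotonicity of $v_\cdot$ inside the integral defining $c_\delta$. For strict positivity when $\delta>0$: since $v_0=\sigma^2>0$ and $v_\cdot$ is nondecreasing, we have $v_s \geq \sigma^2$ for every $s\geq 0$; therefore, whenever $X_k>0$, the interval $(\max(0,X_k-\delta),X_k)$ is nonempty and on it the integrand $e^{\alpha x} v_{\delta+x-X_k}$ is bounded below by $\sigma^2 e^{\alpha(\max(0,X_k-\delta))}>0$. Supercriticality (A1) ensures $\Prob(X_1<\infty)>0$, so the expectation in \eqref{eq:c_delta} is strictly positive.

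For the limits at the endpoints: as $\delta \downarrow 0$, the pointwise limit of the integrand is $e^{\alpha x} v_{x-X_k}$, which vanishes on $(0,X_k)$ because $v_t=0$ for $t<0$; since the integrand is dominated by $\sigma_W^2 e^{\alpha x}$ and $\E\bigl[\sum_{k} e^{-2\alpha X_k}\int_0^{X_k} e^{\alpha x}\dx\bigr] = (m(\alpha)-m(2\alpha))/\alpha < \infty$, dominated convergence yields $c_\delta \to 0$. As $\delta \uparrow \infty$, monotone convergence (together with $v_{\delta+x-X_k}\uparrow v_\infty = \sigma^2/(1-m(2\alpha))$) gives
\begin{align*}
c_\infty
&= \frac{v_\infty}{-m'(\alpha)} \E\bigg[\sum_{k=1}^N e^{-2\alpha X_k} \int_0^{X_k} e^{\alpha x} \, \dx\bigg]
= \frac{v_\infty}{-\alpha m'(\alpha)} \, \E\bigg[\sum_{k=1}^N \bigl(e^{-\alpha X_k} - e^{-2\alpha X_k}\bigr)\bigg] \\
&= \frac{v_\infty \, (m(\alpha) - m(2\alpha))}{-\alpha m'(\alpha)}
= \frac{\sigma^2}{-\alpha m'(\alpha)},
\end{align*}
where the last step uses $m(\alpha)=1$ and $v_\infty = \sigma^2/(1-m(2\alpha))$.

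There is no genuine obstacle here; the only subtlety is to verify that the test function $f=v_{\delta+\cdot}$ belongs to $D(\R)$ and is suitably bounded, so that Lemma \ref{Lem:asymptotics of W_t(2alpha)} applies verbatim. Once this is observed, both modes of convergence and the computation of $c_\delta$ follow from the lemma and bookkeeping involving $m(\alpha)=1$ and the definition of $v_\infty$.
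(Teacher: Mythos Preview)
Your proposal is correct and follows essentially the same route as the paper: apply Lemma~\ref{Lem:asymptotics of W_t(2alpha)} with $f(t)=v_{\delta+t}$ (which is c\`adl\`ag, nonnegative and bounded by $v_\infty$), invoke the $L^1$ and a.s.\ clauses of that lemma, and then read off monotonicity, strict positivity, and the two limits of $c_\delta$ from the monotonicity of $v_\cdot$ together with $v_t=0$ for $t<0$ and $v_\infty=\sigma^2/(1-m(2\alpha))$. The only cosmetic differences are that you use dominated convergence for $\delta\downarrow 0$ where the paper invokes monotone convergence, and you spell out the computation of $c_\infty$ in full; neither changes the argument.
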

\begin{proof}
The validity of (A4) implies that the function $t \mapsto v_t$ is bounded by \eqref{eq:v_infty}.
From the discussion preceding \eqref{eq:v_infty}, we infer that $t \mapsto v_t$ is nondecreasing and right-continuous, hence c\`adl\`ag.
Thus, for any fixed $\delta > 0$,
\eqref{eq:asymptotics of W_t(2alpha) generalized c_delta} follows from Lemma \ref{Lem:asymptotics of W_t(2alpha)}
with $f(t) = v_{\delta + t}$, $t \in \R$.
We infer $c_\delta>0$ from $0 < \sigma^2 = v_0 \leq v_t$ for $t \geq 0$ and the representation
\begin{align*}
\int \limits_{0}^{X_k} \! e^{\alpha x} v_{\delta + x - X_k} \, \dx
&=
\1_{\{X_k \leq \delta \}}\int \limits_{0}^{X_k} \! e^{\alpha x} v_{\delta+x-X_k}\,\dx
+ \1_{\{X_k>\delta\}} \!\! \int \limits_{X_k-\delta}^{X_k} \! e^{\alpha x} v_{\delta+x-X_k}\,\dx
> 0.
\end{align*}
Since $t\mapsto v_t$ is a nondecreasing function, so is $\delta \mapsto c_\delta$.
Hence, $\lim_{\delta \downarrow 0} c_\delta$ and $c_\infty
\defeq\lim_{\delta \uparrow \infty} c_\delta$ exist.
Since $v_t = 0$ for $t<0$ and $\lim_{t\to\infty} v_t = \sigma^2/(1-m(2\alpha))$,
\eqref{eq:limits of c_delta} follows with the help of the monotone convergence theorem.
\end{proof}

\begin{proof}[Proof of Theorem \ref{Thm:CLT for Nerman's martingale}]
For $t \geq 0$, we infer from \eqref{eq:FPE for W}
\begin{align*}
e^{\alpha t/2} \big(W-W_t\big)
&= e^{\alpha t/2} \sum_{u \in \I_t} e^{-\alpha S(u)} ([W]_u-1)	\quad	\text{a.\,s.},
\end{align*}
which given $\H_t$ is a weighted sum of independent, centered and square-integrable random variables (Theorem 4.14 in \cite{Jagers:1989}).
We show that the distribution of this sum given $\H_t$ converges in probability
to the distribution of a centered normal random variable.
For simplicity, we write $\E_t[\cdot]$ to denote the conditional expectation given $\H_t$.
Then, by \eqref{eq:sigma_alpha^2},
\begin{align}
\E_t\bigg[\bigg(e^{\alpha t/2}\sum_{u \in \I_t} e^{-\alpha S(u)} ([W]_u-1)\bigg)^{\!2}\bigg]
&=e^{\alpha t} \E_t \bigg[\sum_{u \in \I_t} e^{-2\alpha S(u)} ([W]_u-1)^2\bigg]	\notag	\\
&= \frac{\sigma^2}{1-m(2\alpha)} e^{\alpha t} \sum_{u \in \I_t} e^{-2\alpha S(u)}.		\label{eq:conditional var}
\end{align}
Observe that $\E[Z_1^2]<\infty$ entails $\E[Z_1\log^+ Z_1]<\infty$ whence, by Lemma \ref{Lem:asymptotics of W_t(2alpha)},
\eqref{eq:asymptotics of W_t(2alpha)} holds in $L^1$ and thereupon
\begin{align}	\label{eq:con var in L^1}
\E_t\bigg[\bigg(e^{\alpha t/2}\sum_{u \in \I_t} e^{-\alpha S(u)} ([W]_u-1)\bigg)^{\!2}\bigg]
&\to \frac{\sigma^2}{-\alpha m'(\alpha)} W	\quad	\text{in } L^1
\end{align}
as $t \to \infty$.
Further, for $x \geq 0$, we define
\begin{equation*}
\sigma^2_W(x) \defeq \E[(W-1)^2 \1_{\{|W-1| > x\}}]
\end{equation*}
and notice that $\lim_{x\to\infty}\sigma^2_W(x)=0$
by \eqref{eq:v_infty} and the dominated convergence theorem.
Consequently, for all $\varepsilon>0$,
\begin{multline*}
\sum_{u \in \I_t} \E_t\big[\big(e^{\alpha t/2} e^{-\alpha S(u)} ([W]_u-1)\big)^2
\1_{\{|e^{\alpha t/2} e^{-\alpha S(u)} ([W]_u-1)| > \varepsilon\}} \big]		\\
\quad	= e^{\alpha t} \sum_{u \in \I_t} e^{-2 \alpha S(u)} \sigma^2_W(\varepsilon e^{-\alpha t/2} e^{\alpha S(u)})
\leq \sigma^2_W(\varepsilon e^{\alpha t/2}) e^{\alpha t} \sum_{u \in \I_t} e^{-2 \alpha S(u)}
\Probto 0
\end{multline*}
as $t \to \infty$ by \eqref{eq:asymptotics of W_t(2alpha)}.
Thus, (2.5) through (2.7) in \cite{Helland:1982} hold, and we conclude \eqref{eq:W_t 1-dim convergence}.
\end{proof}

For the proof of the functional central limit theorem, Theorem \ref{Thm:FCLT for Nerman's martingale},
we need some preparatory lemmas.

\begin{lemma}	\label{Lem:sigma^2}
Suppose that (A1) through (A4) hold.
Then the family $((W-W_t)^2)_{t \in \R}$ is uniformly integrable.
In other words, the function
$\sigma^2_t(x) \defeq \E[|W-W_t|^2 \1_{\{|W-W_t| > x\}}]$, $t \in \R$, $x \geq 0$
is a bounded  on $\R \times [0,\infty)$ with
\begin{equation}	\label{eq:sigma^2(r,x)->0 unif}
\sup_{t \in \R} \sigma^2_t(x) \to 0	\quad	\text{as } x \to \infty.
\end{equation}
\end{lemma}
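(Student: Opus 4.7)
The plan is to dominate the family $(W-W_t)^2$ by a single $L^1$ random variable and then invoke the dominated convergence theorem, which immediately yields both boundedness and the uniform convergence to zero in \eqref{eq:sigma^2(r,x)->0 unif}.

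First I would set $M_\infty \defeq \sup_{s \geq 0} |W_s - 1|$ and recall that, by Doob's maximal $L^2$-inequality \eqref{eq:Doob's max L^2 W_t}, $\E[M_\infty^2] \leq 4 \E[(W-1)^2] < \infty$ under (A1)--(A4). Hence $M_\infty \in L^2$ and $Z \defeq 4 M_\infty^2 \in L^1$.

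Next I would argue the pointwise bound $|W - W_t| \leq 2 M_\infty$ for every $t \in \R$. For $t \geq 0$ we have $|W_t - 1| \leq M_\infty$ by definition, and since $W_s \to W$ almost surely as $s \to \infty$, we also get $|W - 1| \leq M_\infty$ on a set of full measure; the triangle inequality then gives $|W - W_t| \leq |W-1| + |W_t - 1| \leq 2 M_\infty$. For $t < 0$ the convention $W_t = 1$ yields $|W - W_t| = |W - 1| \leq M_\infty \leq 2 M_\infty$. Squaring gives $(W-W_t)^2 \leq Z$ uniformly in $t \in \R$.

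Finally, the uniform domination gives $\sigma_t^2(x) \leq \E[Z \1_{\{(W-W_t)^2 > x\}}] \leq \E[Z \1_{\{Z > x\}}]$, since $(W-W_t)^2 > x$ implies $Z > x$. In particular $\sup_t \sigma_t^2(x) \leq \E[Z] < \infty$ for all $x \geq 0$, proving boundedness, and since $Z \in L^1$, the dominated convergence theorem forces $\E[Z \1_{\{Z > x\}}] \to 0$ as $x \to \infty$, which yields \eqref{eq:sigma^2(r,x)->0 unif}. There is no real obstacle here; the only subtle point worth stating carefully is the inclusion of the convention $W_t = 1$ for negative $t$ in the supremum, which is handled by the trivial bound just above.
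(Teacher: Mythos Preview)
Your argument is correct and takes a slightly more concrete route than the paper's. The paper argues abstractly: since $W_t = \E[W \mid \H_t]$ with $W \in L^2$, the family $(W_t^2)_{t \geq 0}$ is uniformly integrable, and hence so is $((W-W_t)^2)_{t \geq 0}$. You instead exhibit a single dominating integrable variable $Z = 4M_\infty^2$ via Doob's maximal inequality \eqref{eq:Doob's max L^2 W_t}, which is already recorded in the paper, and read off uniform integrability directly from that domination. Both proofs are short; yours has the small advantage of making the dominating variable explicit and of handling the convention $W_t = 1$ for $t<0$ explicitly, while the paper's stays at the level of abstract martingale theory.

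One cosmetic slip: in your final chain the indicator coming from the definition of $\sigma_t^2(x)$ is $\{|W-W_t| > x\}$, not $\{(W-W_t)^2 > x\}$. The correct bound reads
\[
\sigma_t^2(x) \leq \E\big[Z \, \1_{\{|W-W_t| > x\}}\big] \leq \E\big[Z \, \1_{\{2M_\infty > x\}}\big] = \E\big[Z \, \1_{\{Z > x^2\}}\big] \to 0,
\]
which of course still gives the desired conclusion.
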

\begin{proof}
From the discussion preceding \eqref{eq:v_infty}, we know that $W_t \to W$ a.\,s.\  and in $L^2$,
and that $W_t = \E[W|\H_t]$ a.\,s.
Thus, the family $(W_t^2)_{t \geq 0}$ is uniformly integrable,
hence so is the family $((W\!-\!W_t)^2)_{t \geq 0}$,
which implies \eqref{eq:sigma^2(r,x)->0 unif}.
\end{proof}

\begin{lemma}	\label{Lem:Cov_t}
Suppose that (A1) through (A4) hold and let $0 \leq r < s < \infty$. Then, as $t \to \infty$
and with $\Cov_t[\cdot,\cdot]$ and $\Var_t[\cdot]$ denoting conditional covariance and variance given $\H_t$, respectively,
\begin{align}
\Cov_t[e^{\alpha t/2} (W\!-\!W_{t+r}), e^{\alpha t/2} (W\!-\!W_{t+s})]
&= \Var_t[e^{\alpha t/2} (W\!-\!W_{t+s})]	\notag	\\
&\to \frac{e^{-\alpha s} \sigma^2}{-\alpha m'(\alpha)} W	\quad	\text{in } L^1.	\label{eq:Cov_t}
\end{align}
\end{lemma}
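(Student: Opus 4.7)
The plan is to first reduce the covariance to a variance by martingale arguments, then compute the conditional variance via the branching decomposition of $W-W_{t+s}$, apply Lemma~\ref{Lem:asymptotics of W_t(2alpha)} to obtain $L^1$-convergence to a multiple of $W$, and finally identify the limiting constant by matching unconditional expectations, thereby bypassing a direct evaluation of the renewal integral behind Corollary~\ref{Cor:asymptotics of W_t(2alpha)}.

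For the covariance-to-variance step, decompose $W-W_{t+r} = (W-W_{t+s}) + (W_{t+s}-W_{t+r})$. The martingale property gives $\E_t[W-W_{t+r}] = \E_t[W-W_{t+s}] = 0$, and the cross contribution $\E_t[(W_{t+s}-W_{t+r})(W-W_{t+s})]$ vanishes upon first conditioning on $\H_{t+s}$, pulling out the $\H_{t+s}$-measurable factor $W_{t+s}-W_{t+r}$, and invoking $\E_{t+s}[W-W_{t+s}]=0$. Hence $\Cov_t[W-W_{t+r},W-W_{t+s}] = \Var_t[W-W_{t+s}]$. To evaluate the latter, combine \eqref{eq:FPE for W} with \eqref{eq:recursive representation for W_t+r} to obtain
\[
W-W_{t+s} = \sum_{u\in\I_t} e^{-\alpha S(u)}\bigl([W]_u - [W_{s+t-\cdot}]_u\circ S(u)\bigr).
\]
The strong Markov branching property at the optional line $\I_t$ renders the summands conditionally independent and centered given $\H_t$, and the $u$-th summand has conditional variance $e^{-2\alpha S(u)}(v_\infty - v_{s+t-S(u)})$, since $\Var[W-W_r] = v_\infty - v_r$ by martingale orthogonality ($\E[WW_r]=\E[W_r^2]$). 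Therefore
\[
e^{\alpha t}\Var_t[W-W_{t+s}] = e^{\alpha t}\sum_{u\in\I_t} e^{-2\alpha S(u)}(v_\infty - v_{s+t-S(u)}).
\]

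The function $f(x)\defeq v_\infty - v_{s+x}$ is nonnegative, bounded by $v_\infty$, and c\`adl\`ag (as $t\mapsto v_t$ is nondecreasing and right-continuous), so Lemma~\ref{Lem:asymptotics of W_t(2alpha)} applies; the premise $\E[Z_1\log^+Z_1]<\infty$ required for $L^1$-convergence follows from $\E[Z_1^2]<\infty$, which is guaranteed by (A4). This yields $e^{\alpha t}\Var_t[W-W_{t+s}]\to C_s W$ in $L^1$ for some constant $C_s\ge 0$.

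To identify $C_s$, take expectations: the law of total variance together with $\E_t[W-W_{t+s}]=0$ gives $\E\bigl[e^{\alpha t}\Var_t[W-W_{t+s}]\bigr] = e^{\alpha t}(v_\infty - v_{t+s})$. The branching decomposition applied at level $r$ yields $W-W_r = \sum_{u\in\I_r} e^{-\alpha S(u)}([W]_u-1)$, whence $v_\infty - v_r = v_\infty\,\E\bigl[\sum_{u\in\I_r} e^{-2\alpha S(u)}\bigr]$. Combined with the $L^1$ form of Lemma~\ref{Lem:asymptotics of W_t(2alpha)} for $f\equiv 1$, this forces $e^{\alpha r}(v_\infty - v_r)\to \sigma^2/(-\alpha m'(\alpha))$ as $r\to\infty$. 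Writing $e^{\alpha t}(v_\infty - v_{t+s}) = e^{-\alpha s}\cdot e^{\alpha(t+s)}(v_\infty - v_{t+s})$ and passing to the limit yields $\E\bigl[e^{\alpha t}\Var_t[W-W_{t+s}]\bigr]\to e^{-\alpha s}\sigma^2/(-\alpha m'(\alpha))$; comparing with $\E[C_sW]=C_s$ pins down $C_s = e^{-\alpha s}\sigma^2/(-\alpha m'(\alpha))$. The only delicate ingredient is the invocation of the strong Markov branching property at the random optional line $\I_t$ to justify conditional independence of the shifted subtrees; once that is granted, the identification of the limiting constant is essentially automatic via the expectation trick, neatly avoiding any direct evaluation of the integral that defines $c_s$ in Corollary~\ref{Cor:asymptotics of W_t(2alpha)}.
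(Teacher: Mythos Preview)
Your proof is correct and follows essentially the same route as the paper's: the covariance-to-variance reduction via martingale orthogonality, the branching decomposition of $W-W_{t+s}$ over $\I_t$, the application of Lemma~\ref{Lem:asymptotics of W_t(2alpha)} to the function $x\mapsto v_\infty-v_{s+x}$ for $L^1$-convergence to $C_sW$, and the identification of $C_s$ by matching unconditional expectations. The only cosmetic difference is in the last step: the paper invokes the already-established \eqref{eq:con var in L^1} (itself a consequence of Lemma~\ref{Lem:asymptotics of W_t(2alpha)} with $f\equiv 1$) to obtain $e^{\alpha r}\E[(W-W_r)^2]\to\sigma^2/(-\alpha m'(\alpha))$, whereas you re-derive this directly from the identity $v_\infty-v_r=v_\infty\,\E\bigl[\sum_{u\in\I_r}e^{-2\alpha S(u)}\bigr]$ and the $L^1$ form of Lemma~\ref{Lem:asymptotics of W_t(2alpha)} for $f\equiv 1$; these are the same computation in slightly different packaging.
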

Notice that the expression on the right-hand side of \eqref{eq:Cov_t}
is exactly the (conditional) covariance of the limiting process in \eqref{eq:FCLT for Nerman's martingale}.
\begin{proof}
For any $t \geq 0$, we have
\begin{align*}
\Cov_t&[e^{\alpha t/2} (W\!-\!W_{t+r}), e^{\alpha t/2} (W\!-\!W_{t+s})]	\\
&= \Cov_t[e^{\alpha t/2} (W\!-\!W_{t+s}+W_{t+s}\!-\!W_{t+r}), e^{\alpha t/2} (W\!-\!W_{t+s})]	\\
&= \Var_t[e^{\alpha t/2} (W\!-\!W_{t+s})] + \Cov_t[e^{\alpha t/2} (W_{t+s}\!-\!W_{t+r}), e^{\alpha t/2} (W\!-\!W_{t+s})]\\
&=\Var_t[e^{\alpha t/2} (W\!-\!W_{t+s})].
\end{align*}
since increments of square-integrable martingales are (conditionally) uncorrelated.
It thus remains to investigate
$\Var_t[e^{\alpha t/2} (W\!-\!W_{t+s})]$ as $t \to \infty$.
Here, arguing as in the proof of Theorem \ref{Thm:CLT for Nerman's martingale} and using Lemma \ref{Lem:asymptotics of W_t(2alpha)},
we infer, with $\sigma^2_t = \E[(W-W_t)^2]$ for $t \in \R$,
\begin{align*}
\Var_t[e^{\alpha t/2} (W\!-\!W_{t+s})]
&= e^{\alpha t} \sum_{u \in \I_t} e^{-2\alpha S(u)} \sigma^2_{t+s-S(u)}	\\
&\to \frac{W}{-m'(\alpha)} \E\bigg[\sum_{k=1}^{N} e^{-2\alpha X_k} \!\! \int_0^{X_k} \!\!\! e^{\alpha x} \sigma^2_{s+x-X_k} \dx \bigg]
\eqdef d_s W
\end{align*}
as $t \to \infty$ in $L^1$, where $d_s \geq 0$ is a constant.
We now calculate the constant $d_s$,
but avoid evaluating it directly.
Instead, notice that
\begin{align*}
\Var_t\big[e^{\alpha t/2} (W-W_{t+s})\big]
&= \E_t\big[\big(e^{\alpha t/2} (W-W_{t+s})\big)^2\big]	\\
&= \E_t\big[\E_{t+s}\big[\big(e^{\alpha t/2} (W-W_{t+s})\big)^2\big]\big].
\end{align*}
By \eqref{eq:con var in L^1}, $\E_{t+s}[(e^{\alpha t/2} (W-W_{t+s}))^2]$ converges in $L^1$
as $t \to \infty$, in particular, the family $(\E_{t+s}[(e^{\alpha t/2} (W-W_{t+s}))^2])_{t \geq 0}$
is uniformly integrable. Hence so is $(\E_t[(e^{\alpha t/2} (W-W_{t+s}))^2])_{t \geq 0}$.
Consequently,
\begin{align*}
\E\big[\E_t\big[\big(e^{\alpha t/2} (W-W_{t+s})\big)^2\big]\big] \to d_s	\quad	\text{as } t \to \infty.
\end{align*}
On the other hand,
\begin{align*}
\E\big[\E_t\big[\big(e^{\alpha t/2} (W\!-\!W_{t+s})\big)^2\big]\big]
&= e^{-\alpha s} \E\big[\E_t\big[\big(e^{\alpha (t+s)/2} (W\!-\!W_{t+s})\big)^2\big]\big]
\to \frac{e^{-\alpha s} \sigma^2}{-\alpha m'(\alpha)}
\end{align*}
by \eqref{eq:con var in L^1}. Hence, $d_s =  \frac{e^{-\alpha s} \sigma^2}{-\alpha m'(\alpha)}$.
\end{proof}

\begin{proof}[Proof of Theorem \ref{Thm:FCLT for Nerman's martingale}]
We first prove weak convergence of the finite-dimensional distributions on $[0,\infty)$.
To this end, fix $n \in \N$ and $0 \leq s_1 < s_2 \ldots < s_n$.
Abbreviate $t+s_k$ by $t_k$, $k=1,\ldots,n$ and define $t_{n+1} \defeq \infty$ and $W_{t_{n+1}} \defeq W$.
We use the Cram\'er-Wold device which reduces the problem to studying the convergence in law of the following linear combinations:
\begin{equation*}
\sum_{k=1}^n \gamma_k e^{\alpha t/2} (W-W_{t_k})
\end{equation*}
for fixed $\gamma_1,\ldots,\gamma_n \in \R$.
Recall that $\Var_t$ and $\Cov_t$ denote the conditional variance and covariance given $\H_t$, respectively.
Lemma \ref{Lem:Cov_t} yields
\begin{align}
\Var_t&\bigg[\sum_{k=1}^n \gamma_k e^{\alpha t/2} (W\!-\!W_{t_k})\bigg]	\notag	\\
&= \sum_{k=1}^n \gamma_k^2 e^{\alpha t} \Var_t[W\!-\!W_{t_k}] + 2 \sum_{1 \leq j < k \leq n} \gamma_j \gamma_k \Cov_t[W\!-\!W_{t_j},W\!-\!W_{t_k}]
\notag	\\
&\to \bigg(\sum_{k=1}^n \gamma_k^2 e^{-\alpha s_k}
+ 2 \sum_{1 \leq j < k \leq n} \gamma_j \gamma_k e^{-\alpha s_k} \bigg) \frac{\sigma^2}{-\alpha m'(\alpha)} W
\quad	\text{in } L^1
\end{align}
as $t\to\infty$. Note that the expression in the last line is exactly the conditional variance of the linear combinations corresponding to the limiting process in \eqref{eq:FCLT for Nerman's martingale} since
\begin{align*}
\Var&\bigg[\sum_{k=1}^n \gamma_k \Big(\frac{\sigma^2}{-\alpha m'(\alpha)} W\Big)^{\!1/2} B_{e^{-\alpha s_k}} \, \bigg| \, W  \bigg]
= \Var\bigg[\sum_{k=1}^n \gamma_k B_{e^{-\alpha s_k}} \bigg] \frac{\sigma^2}{-\alpha m'(\alpha)} W	\\
&= \bigg(\sum_{k=1}^n \gamma_k^2 e^{-\alpha s_k}
+ 2 \sum_{1 \leq j < k \leq n} \gamma_j \gamma_k e^{-\alpha s_k} \bigg) \frac{\sigma^2}{-\alpha m'(\alpha)} W	\quad	\text{a.\,s.}
\end{align*}
Next, we check the Lindeberg-Feller condition. Using the decomposition
\begin{equation*}
\sum_{k=1}^n \gamma_k e^{\alpha t/2} (W-W_{t_k})
= \sum_{k=1}^n \gamma_k e^{\alpha t/2}  \sum_{u \in \I_t} e^{-\alpha S(u)} \big([W]_u-[W_{t_k-\cdot}]_u \circ (S(u))\big),
\end{equation*}
we argue as in the proof of Theorem \ref{Thm:CLT for Nerman's martingale}.
Recalling the notation $\sigma^2_r(x)=\E[|W-W_r|^2 \1_{\{|W-W_r| > x\}}]$, we infer, for all $\varepsilon > 0$,
\begin{align*}
\sum_{u \in \I_t} & \E_t\big[\big(e^{\alpha(t/2-S(u))} ([W]_u\!-\![W_{t_k-\cdot}]_u \circ (S(u)))\big)^2	\\
&	\hphantom{\E_t\big[\big(} \cdot \1_{\{|e^{\alpha(t/2-S(u))} ([W]_u-[W_{t_k-\cdot}]_u \circ (S(u)))| > \varepsilon\}} \big]	\\
&= e^{\alpha t} \sum_{u \in \I_t} e^{- 2 \alpha S(u)} \sigma^2_{t_k-S(u)}(e^{-\alpha t/2} e^{\alpha S(u)} \varepsilon)
\Probto 0
\end{align*}
by \eqref{eq:asymptotics of W_t(2alpha)} and \eqref{eq:sigma^2(r,x)->0 unif}.

This implies
that the finite-dimensional distributions of $(e^{\alpha t/2} (W-W_{t+s}))_{s \geq 0}$ given $\H_t$
converge weakly to those of $((\sigma^2/(-\alpha m'(\alpha)) W)^{\!1/2} B_{e^{-\alpha s}})_{s \geq 0}$ given $\F_\infty$.
From this, we conclude that
\begin{align}\label{eq:FDDCLT for Nerman's martingale}
(e^{\alpha t/2} (W-W_{t+s}))_{s \geq 0}
\fddistto	\Big(\Big(\frac{\sigma^2}{-\alpha m'(\alpha)} W\Big)^{\!1/2} B_{e^{-\alpha s}}\Big)_{\!\!s \geq 0}
\quad	\text{as }	t \to \infty
\end{align}
where $\fddistto$ denotes weak convergence of finite-dimensional distributions.

The next step is to check that the distributions of the family
\begin{equation}	\label{eq:tightness of e^alpha t/2 (W_t-W_t+s) in D[0,infty)}
(e^{\alpha t/2} (W_t-W_{t+s}))_{s \geq 0}, \ t \geq 0 \text{ are tight in } D([0,\infty)).
\end{equation}
We use Aldous's tightness criterion, see e.g.\ Theorem 16.10 on p.\;178 of \cite{Billingsley:1999}.
To this end, we first check condition (16.22) in the cited source, i.e.,
\begin{equation}	\label{eq:Billingsley (16.22)}
\lim_{x \to \infty} \limsup_{t \to \infty} \Prob\Big(\sup_{0 \leq s \leq b} |e^{\alpha t/2}(W_t-W_{t+s})| \geq x\Big) = 0
\quad	\text{for all } b > 0.
\end{equation}
For any fixed $b,t,x > 0$, by Chebyshev's inequality and Doob's maximal $L^p$-inequality (with $p=2$),
\begin{align}	\label{eq:(16.22) Chebyshev and Doob's max ineq}
\Prob\Big(\sup_{0 \leq s \leq b} |e^{\alpha t/2}(W_t-W_{t+s})| \geq x\Big)
&\leq \frac{4 e^{\alpha t}}{x^2} \E[|W_{t+b}-W_t|^2]
\quad	\text{for all } b > 0.
\end{align}
Here, an application of \eqref{eq:asymptotics of W_t(2alpha) generalized c_delta} yields
\begin{align*}
e^{\alpha t} \E[|W_{t+b}-W_t|^2]
&= e^{\alpha t} \E\bigg[\bigg(\sum_{u \in \I_t} e^{-\alpha S(u)}([W_{t+b-\cdot}]_u \circ S(u) - 1) \bigg)^2 \bigg]	\\
&= e^{\alpha t} \E\bigg[\sum_{u \in \I_t} e^{-2 \alpha S(u)} v_{t+b-S(u)} \bigg] \to c_b	\quad	\text{as } t \to \infty.
\end{align*}
Using this after taking the $\limsup$ as $t \to \infty$ in \eqref{eq:(16.22) Chebyshev and Doob's max ineq},
and then letting $x \to \infty$ gives \eqref{eq:Billingsley (16.22)}.

We now turn to the second condition of Aldous's criterion, namely, for all $\varepsilon, b > 0$
\begin{equation}	\label{eq:Condition 1}
\lim_{\delta \to 0} \limsup_{t \to \infty} \sup_{\tau} \Prob(e^{\alpha t/2}|(W_t-W_{t+\tau+\delta}) - (W_t-W_{t+\tau})| \geq \varepsilon) = 0
\end{equation}
where $\sup_\tau$ is the supremum over all discrete stopping times $0 \leq \tau \leq b$ with respect to the filtration $(\H_{t+s})_{s \geq 0}$.
Here, `discrete' means that $\tau$ takes only finitely many values.
To prove \eqref{eq:Condition 1}, fix $\varepsilon, b > 0$
and let $\tau \leq b$ be a discrete stopping time. Then $\tau_t \defeq t+\tau$ is a stopping time with respect to $(\H_s)_{s \geq 0}$.
We claim that $\I_{\tau_t}$ is an optional line. Clearly, it is a random line. Further,
if $t \leq t_1 \leq \ldots \leq t_m \leq t+b$ are the values $\tau$ takes, then, for any deterministic line $L \subseteq \I$,
\begin{align*}
\{\I_{\tau_t} \preceq L\} = \bigcup_{k=1}^m (\{\tau_t = t_k\} \cap \{\I_{t_k} \preceq L\}) \in \F_L
\end{align*}
since $\tau_t$ is a stopping time with respect to $(\H_s)_{s \geq 0}$, thus $\{\tau_t = t_k\} \in \H_{t_k} = \F_{\I_{t_k}}$
and, consequently, $\{\tau_t = t_k\} \cap \{\I_{t_k} \preceq L\} \in \F_L$ by the definition of $\F_{\I_{t_k}}$.
We may now use the strong Markov branching property (Theorem 4.14 in \cite{Jagers:1989}) to conclude that
\begin{align*}
\Prob&(e^{\alpha t/2}|(W_t-W_{t+\tau+\delta}) - (W_t-W_{t+\tau})| \geq \varepsilon)
= \Prob(e^{\alpha t/2}|(W_{t+\tau+\delta}) - W_{t+\tau}| \geq \varepsilon)	\\
&\leq \frac{e^{\alpha t}}{\varepsilon^2} \E[|(W_{t+\tau+\delta}) - W_{t+\tau}|^2]	\\
&= \frac{e^{\alpha t}}{\varepsilon^2} \E\bigg[\E\bigg[\bigg(\sum_{u \in \I_{\tau_t}} e^{-\alpha S(u)} ([W_{\tau_t+\delta-\cdot}]_u\circ S(u) - 1)\bigg)^2 \, \bigg| \, \F_{\I_{\tau_t}} \bigg] \bigg]	\\
&=  \frac{e^{\alpha t}}{\varepsilon^2} \E\bigg[\sum_{u \in \I_{t+\tau}} e^{-2 \alpha S(u)} v_{t+\tau+\delta - S(u)}\bigg].
\end{align*}
Here, if $0 \leq s_1 \leq \ldots \leq s_m \leq b$ denote the values of $\tau$, then
\begin{align*}
e^{\alpha t} \sum_{u \in \I_{t+\tau}} & e^{-2 \alpha S(u)} v_{t+\tau+\delta - S(u)}	\\
&= \sum_{k=1}^m e^{\alpha s_k} \1_{\{\tau=s_k\}} e^{-\alpha(t+s_k)} \sum_{u \in \I_{t+s_k}} e^{-2 \alpha S(u)} v_{t+\tau+\delta - S(u)}	\\
&\to \sum_{k=1}^m e^{\alpha s_k} \1_{\{\tau=s_k\}} c_\delta W
= e^{\alpha \tau} c_\delta W	\quad	\text{in } L^1	\text{ as } t \to \infty
\end{align*}
by Corollary \ref{Cor:asymptotics of W_t(2alpha)}.
Hence,
\begin{align*}
\limsup_{t \to \infty} \sup_{\tau} \Prob&(e^{\alpha t/2}|(W_t-W_{t+\tau+\delta}) - (W_t-W_{t+\tau})| \geq \varepsilon)
\leq \frac{1}{\varepsilon^2} e^{\alpha b} c_\delta.
\end{align*}
Thus, \eqref{eq:Condition 1} follows from \eqref{eq:limits of c_delta}.
Combining \eqref{eq:Billingsley (16.22)} and \eqref{eq:Condition 1} with Theorem 16.10 on p.\;178 of \cite{Billingsley:1999}
yields \eqref{eq:tightness of e^alpha t/2 (W_t-W_t+s) in D[0,infty)}.
Since the increments of the processes $(e^{\alpha t/2} (W_t-W_{t+s}))_{s \geq 0}$
and $(e^{\alpha t/2} (W-W_{t+s}))_{s \geq 0}$ are the same,
Theorem 16.5 in \cite{Billingsley:1999} in combination with  the fact that $e^{\alpha t/2}(W-W_t)$ converges in distribution implies that also the distributions of the family $(e^{\alpha t/2} (W-W_{t+s}))_{s \geq 0}$, $t \geq 0$ are tight in $D([0,\infty))$.
Together with the convergence of the finite-dimensional distributions, we obtain
\begin{align}	\label{eq:FCLT for Nerman's martingale on D[0,infty)}
(e^{\alpha t/2} (W\!-\!W_{t+s}))_{s \geq 0}
\Rightarrow	\Big(\Big(\frac{\sigma^2}{-\alpha m'(\alpha)} W\Big)^{\!1/2} \! B_{e^{-\alpha s}}\Big)_{\!\!s \geq 0}
\text{ as }	t \to \infty \text{ on } D([0,\infty)).
\end{align}
For any $r> 0$ the shift operator $\theta_r:f(\cdot)\mapsto f(\cdot +r)$ is an isometry between $D([0,b])$ and $D([-r,b-r])$. Hence, $\theta_r$ is a continuous mapping from $D([0,\infty))$ to $D([-r,\infty))$.
In particular, for any $r>0$, by the scaling invariance of Brownian motion,
we infer
\begin{align*}	
(e^{\alpha t/2} (W\!-\!W_{t+s}))_{s \geq -r}&=
\theta_r\Big((e^{\alpha t/2} (W\!-\!W_{t-r+s}))\Big)_{s \geq -r}\\
&\Rightarrow	\theta_r\Big(e^{\alpha r/2}\Big(\frac{\sigma^2}{-\alpha m'(\alpha)} W\Big)^{\!1/2} \! B_{e^{-\alpha s}}\Big)_{\!\!s \geq -r}\\
&\eqdist \Big(\Big(\frac{\sigma^2}{-\alpha m'(\alpha)} W\Big)^{\!1/2} \! B_{e^{-\alpha s}}\Big)_{\!\!s \geq 0},
\end{align*}
as $t \to \infty$ on $ D([-r,\infty))$. Here, $\eqdist$ denotes equality of distributions. Since this holds for every $r>0$, we arrive at \eqref{eq:FCLT for Nerman's martingale}.
\end{proof}

\subsection{Law of the iterated logarithm: proofs}

The key tool in our proof of the law of the iterated logarithm for Nerman's martingale
is Proposition 7.2 on p.~436 in \cite{Asmussen+Hering:1983}.
\begin{lemma}	\label{Lem:Asmussen+Hering}
Let $(\mathcal{R}_n)_{n\in\N_0}$ be an increasing sequence of
$\sigma$-fields and $(T_n)_{n\in\N_0}$ be a sequence of random
variables such that
\begin{equation}	\label{eq:Berry-Esseen-type condition}
\sum_{n \geq 0} \sup_{y\in\R} \left|\Prob(T_n\leq y \, | \, \mathcal{R}_n) - \Phi(y)\right|<\infty \quad\text{{\rm a.\,s.}}
\end{equation}
where $\Phi(y)\defeq \frac 1 {\sqrt{2\pi}} \int_{-\infty}^y e^{-x^2/2} \, \dx$, $y\in\R$. Then
\begin{equation*}
\limsup_{n\to\infty} \frac {T_n}{\sqrt{\log n}} \leq \sqrt{2} \quad\text{{\rm a.\,s.}}
\end{equation*}
If there is a $k\in\N$ such that $T_n$ is
$\mathcal{R}_{n+k}$-measurable for each $n\in\N_0$, then
\begin{equation*}
\limsup_{n\to\infty} \frac {T_n}{\sqrt{\log n}} = \sqrt{2} \quad\text{{\rm a.\,s.}}
\end{equation*}
\end{lemma}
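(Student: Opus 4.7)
The plan is to establish the upper and lower bounds for $\limsup T_n/\sqrt{\log n}$ separately, both by conditional Borel-Cantelli-type arguments driven by the hypothesis \eqref{eq:Berry-Esseen-type condition}. I abbreviate $\delta_n \defeq \sup_{y \in \R}|\Prob(T_n \leq y \, | \, \mathcal{R}_n) - \Phi(y)|$, which is $\mathcal{R}_n$-measurable and a.\,s.\ satisfies $\sum_n \delta_n < \infty$.

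For the upper bound, fix $\varepsilon > 0$ and set $a_n \defeq (\sqrt{2}+\varepsilon)\sqrt{\log n}$. The Mills-ratio estimate $1-\Phi(a_n) \leq (a_n \sqrt{2\pi})^{-1} e^{-a_n^2/2}$ is of order $n^{-(1+\sqrt{2}\varepsilon + \varepsilon^2/2)}/\sqrt{\log n}$, hence deterministically summable, and the hypothesis yields
\begin{equation*}
\Prob(T_n > a_n \, | \, \mathcal{R}_n) \leq (1-\Phi(a_n)) + \delta_n,
\end{equation*}
so that the conditional probabilities are a.\,s.\ summable. A direct appeal to conditional Borel-Cantelli is obstructed by the fact that $\{T_n > a_n\}$ need not be $\mathcal{R}_m$-measurable for any $m$; I would bypass this with the truncation $E_n^N \defeq \{\sum_{m \leq n}\delta_m \leq N\} \in \mathcal{R}_n$ and the computation
\begin{equation*}
\E\bigg[\sum_n \1_{\{T_n > a_n\}} \1_{E_n^N}\bigg] \leq \sum_n(1-\Phi(a_n)) + \E\bigg[\sum_n \delta_n \1_{E_n^N}\bigg] \leq \sum_n(1-\Phi(a_n)) + N < \infty.
\end{equation*}
Hence $\sum_n \1_{\{T_n>a_n\}}\1_{E_n^N} < \infty$ a.\,s.; on $\{\sum_n \delta_n \leq N\}$ each $\1_{E_n^N}$ equals $1$, and letting $N \to \infty$ and then $\varepsilon \downarrow 0$ along a countable sequence yields $\limsup T_n/\sqrt{\log n} \leq \sqrt{2}$ a.\,s.

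For the matching lower bound under the $\mathcal{R}_{n+k}$-measurability assumption, I would pass to the subsequence $n_j \defeq jk$, so that $A_j \defeq \{T_{n_j} > b_j\}$ with $b_j \defeq (\sqrt{2}-\varepsilon)\sqrt{\log n_j}$ is measurable with respect to $\mathcal{R}_{n_{j+1}}$. The Mills-ratio lower bound together with the Berry-Esseen hypothesis gives
\begin{equation*}
\Prob(A_j \, | \, \mathcal{R}_{n_j}) \geq (1-\Phi(b_j)) - \delta_{n_j},
\end{equation*}
and since $(\sqrt{2}-\varepsilon)^2/2 < 1$ for $\varepsilon \in (0,\sqrt{2})$, the deterministic sum $\sum_j (1-\Phi(b_j))$ diverges while $\sum_j \delta_{n_j}$ is a.\,s.\ finite. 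With respect to the filtration $\mathcal{G}_j \defeq \mathcal{R}_{n_{j+1}}$ the sequence $(A_j)$ is adapted and $\Prob(A_j \, | \, \mathcal{G}_{j-1}) = \Prob(A_j \, | \, \mathcal{R}_{n_j})$, so L\'evy's extension of the second Borel-Cantelli lemma yields $\sum_j \1_{A_j} = \infty$ a.\,s., whence $\limsup T_n/\sqrt{\log n} \geq \sqrt{2}-\varepsilon$ a.\,s.; letting $\varepsilon \downarrow 0$ finishes the proof.

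The main obstacle I anticipate is the upper-bound Borel-Cantelli step: a.\,s.\ summability of $\Prob(T_n > a_n \, | \, \mathcal{R}_n)$ does not, by itself, yield summability of the unconditional indicators, because the events $\{T_n > a_n\}$ are not adapted to $(\mathcal{R}_n)_n$. The truncation by the $\mathcal{R}_n$-measurable events $E_n^N$ is the essential device that converts the a.\,s.\ (but not $L^1$) summability of $\delta_n$ into a genuine Borel-Cantelli conclusion.
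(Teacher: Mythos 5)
The paper does not give its own proof of this lemma; it is cited verbatim as Proposition 7.2 on p.~436 of Asmussen and Hering's book, so there is no internal argument to compare against. That said, your proof is correct, and I checked all the steps. In particular you correctly identified and solved the one genuine subtlety: for the upper bound, a.\,s.\ summability of $\Prob(T_n>a_n\mid\mathcal{R}_n)$ does not directly yield a.\,s.\ summability of $\1_{\{T_n>a_n\}}$, since those events are not adapted to $(\mathcal{R}_n)_n$, and the usual conditional first Borel--Cantelli lemma does not apply. Your truncation by the $\mathcal R_n$-measurable sets $E_n^N=\{\sum_{m\le n}\delta_m\le N\}$ restores an honest $L^1$ bound: since the $E_n^N$ are decreasing in $n$ and $\sum_{m\le n}\delta_m\le N$ on $E_n^N$, one indeed has $\sum_n\delta_n\1_{E_n^N}\le N$ pointwise, hence
\begin{equation*}
\E\bigg[\sum_n\1_{\{T_n>a_n\}}\1_{E_n^N}\bigg]\le\sum_n\big(1-\Phi(a_n)\big)+N<\infty,
\end{equation*}
and letting $N\to\infty$ covers $\{\sum_n\delta_n<\infty\}$, which has full measure. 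The Mills-ratio bounds, the subsequence $n_j=jk$, the choice of filtration $\mathcal G_j=\mathcal R_{n_{j+1}}$, and the application of L\'evy's conditional second Borel--Cantelli lemma for the lower bound are all in order. The only cosmetic point is that $b_j$ should be read as defined for $j\ge 1$ (or $j\ge 2$) so that $\log n_j>0$; this does not affect the conclusion.

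A small remark on perspective: an alternative to your truncation is to condition directly on $\mathcal R_\infty=\sigma(\bigcup_n\mathcal R_n)$ and use that $\E\big[\sum_n\1_{A_n}\mid\mathcal R_\infty\big]<\infty$ a.\,s.\ forces $\sum_n\1_{A_n}<\infty$ a.\,s.; the difficulty there is that the hypothesis controls $\Prob(T_n>a_n\mid\mathcal R_n)$, not $\Prob(T_n>a_n\mid\mathcal R_\infty)$, and there is no obvious comparison between the two. Your route sidesteps this cleanly and is, in my view, the right proof.
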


The next result is Lemma A.2 in \cite{Iksanov+Kabluchko:2016},
an infinite version of the Berry-Esseen inequality for independent,
centered random variables. It is likely that this fact is also given in other sources.

\begin{lemma}	\label{Lem:Berry-Esseen}
Let $Y_1, Y_2,\ldots$ be independent random variables
with $\E[Y_i] = 0$, $\sigma^2_{Y_i} \defeq \Var[Y_i] < \infty$ and $\rho_{Y_i} \defeq \E[|Y_i|^3]$, $i\in\N$.
If $\sum_{i\geq 1} \sigma^2_{Y_i} < \infty$, then, for an absolute constant $C$,
\begin{equation}	\label{eq:Berry-Esseen}
\sup_{y\in\R} \left|\Prob\left(\frac{\sum_{i\geq 1}
Y_i}{(\sum_{i\geq 1} \sigma^2_{Y_i})^{1/2}}\leq y\right) -
\Phi(y) \right| \leq C \frac{\sum_{i\geq 1} \rho_{Y_i}}
{\big(\sum_{i\geq 1} \sigma^2_{Y_i}\big)^{3/2}}.
\end{equation}
\end{lemma}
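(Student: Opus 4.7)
My plan is to deduce this infinite-sum Berry--Esseen bound from the standard finite-sum version by truncation and passage to the limit. First I would reduce to the case $0 < s^2 \defeq \sum_{i \geq 1} \sigma^2_{Y_i} < \infty$ and $\sum_{i \geq 1} \rho_{Y_i} < \infty$, since otherwise \eqref{eq:Berry-Esseen} is trivial. Under these assumptions, Kolmogorov's convergence theorem ensures that $S \defeq \sum_{i \geq 1} Y_i$ converges almost surely and in $L^2$, with $\Var[S] = s^2$.

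Setting $S_n \defeq \sum_{i=1}^n Y_i$ and $s_n^2 \defeq \sum_{i=1}^n \sigma^2_{Y_i}$, I would next invoke the classical (non-i.i.d.) Berry--Esseen inequality for the finite partial sums to obtain, for some absolute constant $C$,
\[
\sup_{y \in \R} \bigl|\Prob(S_n/s_n \leq y) - \Phi(y)\bigr| \leq C \frac{\sum_{i=1}^n \rho_{Y_i}}{s_n^3}.
\]
As $n \to \infty$, one has $s_n^2 \to s^2$ and $S_n \to S$ in $L^2$, hence $S_n/s_n \to S/s$ in distribution. The right-hand side converges to $C \sum_{i \geq 1} \rho_{Y_i}/s^3$, while at every continuity point $y$ of the distribution function $F$ of $S/s$ the pointwise limit of the left-hand side is $|F(y) - \Phi(y)|$.

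Passing to the limit in the finite-sum estimate therefore yields $|F(y) - \Phi(y)| \leq C \sum_{i \geq 1} \rho_{Y_i}/s^3$ at every continuity point $y$ of $F$. The final step is to promote this to the supremum bound over all $y \in \R$ asserted in \eqref{eq:Berry-Esseen}; this follows from the continuity of $\Phi$, the right-continuity of $F$, and the fact that $F$ has at most countably many discontinuities, so that every real number is a right-limit of continuity points of $F$. I expect this last measure-theoretic cleanup to be the only (and very mild) obstacle; everything else is a direct passage to the limit in the classical Berry--Esseen bound.
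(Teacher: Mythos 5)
Your proof is correct and complete. The paper itself does not prove this lemma but merely cites it as Lemma A.2 of the Iksanov--Kabluchko (2016) reference, so there is no in-text argument to compare against; your route---apply the classical non-identically-distributed finite-sample Berry--Esseen bound to the partial sums $S_n/s_n$, then pass to the limit using $L^2$ convergence of $S_n$ and $s_n^2 \to s^2$, obtain the bound at continuity points of the limiting distribution function via the Portmanteau theorem, and finally extend to all $y$ by density of continuity points and right-continuity of distribution functions---is the standard way to obtain this infinite version and is almost certainly the argument in the cited source. The only cosmetic remark is that the case $s^2 = 0$ is degenerate rather than merely excluded (all $Y_i$ vanish a.s.\ and the ratio is undefined), but that in no way affects the substance.
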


Recall that the limits in \eqref{eq:limsup LIL} and \eqref{eq:liminf LIL} are
considered on the survival set $\mathcal{S}$. We only give a
complete proof for the upper limit. Investigating $W_t-W$
rather than $W-W_t$ gives the result for the lower limit
at no extra cost. Although the scheme of the proof is similar to
that of Theorem 3.4 on p.~130 in \cite{Asmussen+Hering:1983} in which a Markov branching process was investigated, technical details differ at places. Without loss of generality we
assume in what follows that $\Prob(\Surv)=1$ (otherwise we
have to use Lemma \ref{Lem:Asmussen+Hering} with the probability measure
$\Prob$ replaced with $\Prob(\cdot|\Surv)$ and write ``a.\,s.\
on the survival set $\Surv$'' rather than ``a.\,s.''
throughout). This assumption ensures that $W$ is positive
a.\,s.\ rather than with positive probability.

For $t,r>0$, we use the following representations
derived from \eqref{eq:recursive representation for W_t+r}
and \eqref{eq:FPE for W}:
\begin{align*}
W_{t+r} - W_t &= \sum_{u \in \I_t} e^{-\alpha S(u)}([W_{r+t-\cdot}]_u \circ S(u) \, - 1)	
\\
\text{and}	\quad	
W-W_t &= \sum_{u \in \I_t}e^{-\alpha S(u)} ([W]_u-1).	
\end{align*}
Recall that the $S(u)$, $u\in\I_t$ are $\H_t$-measurable,
whereas the $[W_{r-x}]_u$, $u \in \I_t$ and the $[W]_u$, $u \in \I_t$ are independent of $\H_t$, see Theorem 4.14 in \cite{Jagers:1989}.
Since we do not assume $\E [|W_t|^3]<\infty$, we
start by investigating the sums as above with truncated summands.
For $t \geq 0$ and $r \in (0,\infty]$, let
\begin{equation*}
W_{t,r}(u) \defeq e^{-\alpha S(u)}([W_{r+t-\cdot}]_u\circ S(u)-1)  \1_{\{e^{\alpha t/2} e^{-\alpha S(u)}|[W_{r+t-\cdot}]_u \circ S(u) -1|\leq 1\}}
\end{equation*}
and
\begin{equation}	\label{repr2}
V_{t,r} = \sum_{u\in\I_t} \big(W_{t,r}(u)-\E_t[W_{t,r}(u)]\big).
\end{equation}

\begin{lemma}	\label{Lem:asymptotic var V_t,r}
For $r \in (0,\infty]$, with $c_r$ as defined in \eqref{eq:c_delta}, we have
\begin{equation}	\label{eq:asymptotic var V_t,r}
\lim_{t\to\infty} e^{\alpha t} \Var_t [V_{t,r}]= c_r W
\quad	\text{{\rm a.\,s.}}
\end{equation}
\end{lemma}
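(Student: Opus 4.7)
The plan is to reduce the conditional variance of $V_{t,r}$ to a sum over $u\in\I_t$ of conditional variances (using the strong Markov branching property), identify the main term as one to which Corollary~\ref{Cor:asymptotics of W_t(2alpha)} applies, and show that the truncation produces only negligible corrections because the truncation threshold tends to infinity uniformly in $u\in\I_t$.

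\textbf{Step 1 (Decomposition).} By Theorem~4.14 in \cite{Jagers:1989}, the family $(W_{t,r}(u))_{u\in\I_t}$ is independent given $\H_t$, and each summand is centered under $\E_t[\,\cdot\,]$. Hence
\begin{equation*}
\Var_t[V_{t,r}] = \sum_{u\in\I_t} \Var_t[W_{t,r}(u)].
\end{equation*}
Given $\H_t$, the random variable $Y_u \defeq [W_{r+t-\cdot}]_u\circ S(u) -1$ has the unconditional law of $W_{r+t-S(u)}-1$ (with $W_s\defeq 1$ for $s<0$), while $S(u)$ is $\H_t$-measurable. With the abbreviation $x_u\defeq e^{\alpha(S(u)-t/2)}$, a direct computation yields
\begin{equation*}
e^{\alpha t}\Var_t[W_{t,r}(u)]
= e^{\alpha t}e^{-2\alpha S(u)} \bigl[A_{r+t-S(u)}(x_u) - B_{r+t-S(u)}(x_u)^2\bigr],
\end{equation*}
where $A_s(x)\defeq \E[(W_s-1)^2\mathbbm{1}_{\{|W_s-1|\leq x\}}]$ and $B_s(x)\defeq \E[(W_s-1)\mathbbm{1}_{\{|W_s-1|\leq x\}}]$.

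\textbf{Step 2 (Uniform truncation estimates).} Doob's maximal $L^2$-inequality \eqref{eq:Doob's max L^2 W_t} provides an integrable dominating variable $M^2\defeq\sup_{s\geq 0}(W_s-1)^2$. Hence
\begin{equation*}
\sup_{s\in\R} |v_s - A_s(x)| \leq \E[M^2 \mathbbm{1}_{\{M>x\}}] \longrightarrow 0
\quad\text{and}\quad
\sup_{s\in\R} |B_s(x)| \leq \E[M\mathbbm{1}_{\{M>x\}}] \longrightarrow 0
\end{equation*}
as $x\to\infty$, the second bound using that $\E[W_s-1]=0$ so $B_s(x) = -\E[(W_s-1)\mathbbm{1}_{\{|W_s-1|>x\}}]$. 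Since $S(u)>t$ for every $u\in\I_t$, we have $x_u \geq e^{\alpha t/2}\to\infty$ uniformly in $u\in\I_t$; consequently there exists a deterministic $\eta(t)\to 0$ with
\begin{equation*}
\sup_{u\in\I_t} |v_{r+t-S(u)} - A_{r+t-S(u)}(x_u)| + \sup_{u\in\I_t} B_{r+t-S(u)}(x_u)^2 \leq \eta(t).
\end{equation*}

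\textbf{Step 3 (Main term and conclusion).} Writing
\begin{equation*}
e^{\alpha t}\Var_t[V_{t,r}]
= e^{\alpha t}\sum_{u\in\I_t} e^{-2\alpha S(u)} v_{r+t-S(u)} + R_t,
\end{equation*}
the main term converges a.\,s.\ to $c_r W$ by Corollary~\ref{Cor:asymptotics of W_t(2alpha)} applied with $\delta=r$ (legitimate because $t\mapsto v_t$ is bounded and c\`adl\`ag and (A5) holds). The remainder satisfies
\begin{equation*}
|R_t| \leq \eta(t)\cdot e^{\alpha t}\sum_{u\in\I_t} e^{-2\alpha S(u)},
\end{equation*}
and the sum on the right converges a.\,s.\ to a finite multiple of $W$ by Lemma~\ref{Lem:asymptotics of W_t(2alpha)} under (A5). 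Since $\eta(t)\to 0$, this forces $R_t\to 0$ a.\,s., and \eqref{eq:asymptotic var V_t,r} follows.

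\textbf{Anticipated obstacle.} The only delicate point is the uniform-in-$s$ control of the truncation tails $A_s(x)\to v_s$ and $B_s(x)\to 0$; the standard dominated-convergence argument for a fixed $s$ is not enough because $r+t-S(u)$ varies with $u$. This is resolved by the dominating variable $M^2$ furnished by Doob's inequality, which is exactly the reason assumption (A4) (and thus $W\in L^2$) is needed here. Everything else is a bookkeeping exercise based on results already established.
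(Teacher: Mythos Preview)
Your proof is correct and follows the same overall strategy as the paper: decompose $\Var_t[V_{t,r}]$ into a second-moment sum minus a squared-first-moment sum, identify the main term as the Nerman-type sum $e^{\alpha t}\sum_{u\in\I_t}e^{-2\alpha S(u)}v_{r+t-S(u)}$ handled by Corollary~\ref{Cor:asymptotics of W_t(2alpha)} (or, for $r=\infty$, directly by Lemma~\ref{Lem:asymptotics of W_t(2alpha)} since $v_\infty$ is constant), and show the truncation corrections vanish.

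The only tactical difference lies in how the truncation error is controlled. The paper sandwiches $A_s(x_u)$ between $v_s(c)$ and $v_s$ for a fixed level $c$, applies Nerman's theorem at that fixed $c$, and then lets $c\uparrow\infty$ via monotone convergence; the squared-mean term is treated similarly through a bounded c\`adl\`ag function $\bar v_s(c)$ whose sup norm tends to $0$. You instead use Doob's maximal function $M$ once to obtain the uniform-in-$s$ bounds $\sup_s|v_s-A_s(x)|\le\E[M^2\1_{\{M>x\}}]$ and $\sup_s|B_s(x)|\le\E[M\1_{\{M>x\}}]$, which yields a single deterministic error $\eta(t)\to0$ and avoids the double limit. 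This is slightly cleaner; the paper's route, on the other hand, shows more explicitly that each piece of the remainder is itself a Nerman-type quantity.
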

\begin{proof}
Conditionally on $\H_t$, the random variables $W_{t,r}(u)$, $u\in\I_t$
are independent (but not identically distributed).
By definition of $V_{t,r}$, we have
\begin{equation*}
\Var_t[V_{t,r}]
= \sum_{u\in\I_t} \E_t[W_{t,r}(u)^2] - \sum_{u\in\I_t} \big(\E_t[W_{t,r}(u)]\big)^2
\eqdef G_{t,r}' -G_{t,r}''.
\end{equation*}
It is sufficient to show that
\begin{align}
\lim_{t \to \infty} e^{\alpha t} G_{t,r}'		= c_r W \quad	&\text{a.\,s.}
\label{eq:tech limit 1}	\\
\text{and}	\quad
\lim_{t \to \infty} e^{\alpha t} G_{t,r}''		= 0	 \quad	&\text{a.\,s.}					\label{eq:tech limit 2}
\end{align}
To this end, for $t \in \R$, let $F_t$ and $G_t$
denote the distribution functions of $|W_t-1|$ and $\sup_{0 \leq s \leq t} |W_s-1|$, respectively.
For instance, $F_t(x) = \Prob(|W_t-1|\leq x)$ for $x \in \R$.	\smallskip

\noindent \emph{Proof of \eqref{eq:tech limit 1}.}
We have
\begin{align*}
G_{t,r}'
&= \sum_{u\in\I_t} \E_t \big[W_{t,r}(u)^2\big]
= \sum_{u\in\I_t} e^{-2\alpha S(u)} \int_{[0,\, e^{-\alpha t/2}e^{\alpha S(u)}]} \!\!\! x^2 \, {\rm d}F_{r+t-S(u)}(x).
\end{align*}
For $u\in\I_t$, $S(u) > t$ and therefore, for every $c > 0$,
$e^{-\alpha t/2}e^{\alpha S(u)} \geq e^{\alpha t/2} \geq c$ for all sufficiently large $t$.
Consequently
\begin{align*}
\sum_{u\in\mathcal{I}_t}e^{-2\alpha S(u)}
v_{r+t-S(u)}(c)
\leq G_{t,r}'
\leq \sum_{u\in\I_t} e^{-2\alpha S(u)} v_{r+t-S(u)}
\end{align*}
where $v_s(c) \defeq \int_{[0,\,c]} x^2 \, {\rm d}F_{s}(x)$. Corollary \ref{Cor:asymptotics of W_t(2alpha)} yields
\begin{align}
e^{\alpha t} G_{t,r}' &\leq e^{\alpha t} \sum_{u \in \I_t} \! e^{-2\alpha S(u)} v_{r+t-S(u)}		\notag	\\
&\to \frac{W}{-m'(\alpha)} \E\bigg[\sum_{k=1}^{N} e^{-2\alpha X_k} \!\! \int_0^{X_k} \!\!\! e^{\alpha x} v_{r+x-X_k} \, \dx \bigg]
= c_r W	\label{eq:asymptotics of W_t(2alpha) generalized recalled}
\end{align}
a.\,s.\ and in $L^1$ as $t \to \infty$
where the definition of $c_r$ should be recalled from \eqref{eq:c_delta}.
Analogously,
for any $c > 0$,
\begin{align}
e^{\alpha t} G_{t,r}'
&\geq e^{\alpha t} \sum_{u \in \I_t} \! e^{-2\alpha S(u)} v_{r+t-S(u)}(c)	\notag	\\	
&\to \frac{W}{-m'(\alpha)} \E\bigg[\sum_{k=1}^{N} e^{-2\alpha X_k} \!\! \int_0^{X_k} \!\!\! e^{\alpha x} v_{r+x-X_k}(c) \, \dx \bigg]
\label{eq:asymptotics of W_t(2alpha) generalized recalled 2}
\end{align}
a.\,s.\ and in $L^1$. Since $v_s(c) \uparrow v_s$ as $c \uparrow \infty$, \eqref{eq:tech limit 1}
follows from the monotone convergence theorem.	\smallskip

\noindent \emph{Proof of~\eqref{eq:tech limit 2}.}
Since $\E[[W_s]_u-1]=0$ for all $s \in \R$,
\begin{align*}
G_{t,r}''
&= \sum_{u\in\I_t} \! e^{-2\alpha S(u)}\big(\E \big(([W_{r+t-\cdot}]_u \circ S(u) -1) \1_{\{e^{\alpha t/2}e^{-\alpha S(u)}|[W_{r+t-\cdot}]_u \circ S(u)-1|\leq 1\}} \big)\big)^2\\
&= \sum_{u\in\I_t} \! e^{-2\alpha S(u)}\big(\E \big[([W_{r+t-\cdot}]_u \circ S(u) -1) \1_{\{e^{\alpha t/2}e^{-\alpha S(u)}|[W_{r+t-\cdot}]_u \circ S(u)-1| >  1\}} \big]\big)^2.
\end{align*}
In view of $[W_s]_u-1\leq |[W_s]_u-1|$ and $S(u) > t$ for $u \in \I_t$, we have
\begin{align*}
G_{t,r}'' &\leq
\sum_{u\in\I_t} \bigg(e^{-2\alpha S(u)}\bigg(\int_{(e^{-\alpha t/2} e^{\alpha S(u)}, \infty)} x \, {\rm d} F_{r+t-S(u)}(x)\bigg)^{\!\! 2} \, \bigg)\\
&\leq \sum_{u\in\I_t}e^{-2\alpha S(u)} \bigg(\int_{(c, \infty)} x \, {\rm d} F_{r+t-S(u)}(x)\bigg)^2	\\
&\eqdef \sum_{u\in\I_t}e^{-2\alpha S(u)} \bar v_{r+t-S(u)}(c)
\end{align*}
for all sufficiently large $t$ where $\bar v_x(c)\defeq (\E[|W_x-1|\1_{\{|W_x-1| \geq c\}}])^2$ for $x\geq 0$.
Since $((W_t-1)^2)_{t \geq 0}$ is a submartingale,
\begin{align*}
0 \leq \bar v_t(c)
&\leq 
\E[(W_t-1)^2] \leq \E[(W-1)^2] = \sigma_W^2 < \infty,
\end{align*}
i.e., $t\mapsto \bar v_t(c)$ is a nonnegative and bounded function.
Doob's maximal inequality enables us to apply the dominated convergence theorem to show that
the function $t\mapsto \bar v_t(c)$ is c\`adl\`ag.
Consequently, we may apply \eqref{eq:asymptotics of W_t(2alpha) generalized} to conclude that
\begin{align*}
e^{\alpha t} G_{t,r}'' &\leq \sum_{u\in\I_t}e^{-2\alpha S(u)} \bar v_{r+t-S(u)}(c)	\\
&\to \frac{W}{-m'(\alpha)} \E\bigg[\sum_{k=1}^{N} e^{-2\alpha X_k} \!\! \int_0^{X_k} \!\!\! e^{\alpha x} \bar v_{r+x-X_k}(c) \, \dx \bigg]
\quad	\text{a.\,s.\ and in } L^1.
\end{align*}
Further,
\begin{equation*}
\|\bar v_{\cdot}(c)\|_{\infty} \defeq \sup_{t \in \R} v_t(c) = \sup_{t \in \R} \E[|W_t-1| \1_{\{|W_t-1| \geq c\}}] \to 0
\quad	\text{as }	c \to \infty.
\end{equation*}
Therefore, as $c \to \infty$,
\begin{align*}
\E\bigg[\sum_{k=1}^{N} e^{-2\alpha X_k} \!\! \int_0^{X_k} \!\!\! e^{\alpha x} \bar v_{r+x-X_k}(c) \, \dx \bigg]
&= \int \limits_{[0,\infty)} \! e^{-2\alpha t} \! \int_0^t e^{\alpha x} \bar v_{r+x-t}(c) \, \dx \, \mu(\dt)
\\ &\leq \frac{\|\bar v_{\cdot}(c)\|_{\infty}}{\alpha} \int \limits_{[0,\infty)} \! e^{-2\alpha t} (e^{\alpha t}-1) \, \mu(\dt)
\to 0.
\end{align*}
This proves \eqref{eq:tech limit 2}.
\end{proof}

Our proof of \eqref{eq:limsup LIL} consists of two parts.
In the first part, Lemma \ref{Lem:limsup LIL along lattice},
we obtain \eqref{eq:limsup LIL} with the limit $t \to \infty$ taken
along the points of a lattice $\delta n$, $n \in \N$ where $\delta>0$ is fixed but arbitrary.
In the second part, we extend the convergence in \eqref{eq:limsup LIL} along lattice sequences
to arbitrary sequences $t \to \infty$.

\noindent
\begin{lemma}	\label{Lem:limsup LIL along lattice}
For every $\delta > 0$, we have
\begin{align}
\limsup_{n \to \infty} \frac{e^{\alpha n\delta/2}}{\sqrt{\log (n\delta)}}(W-W_{n\delta})
&=\Big(\frac{2\sigma^2}{-\alpha m'(\alpha)} W\Big)^{\!1/2}	\quad	\text{{\rm a.\,s.}}	\label{eq:limsup LIL along lattice}
\end{align}
\end{lemma}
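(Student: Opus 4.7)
The plan is to apply Lemma~\ref{Lem:Asmussen+Hering} with $\mathcal{R}_n \defeq \H_{n\delta}$ and $T_n \defeq e^{\alpha n\delta/2}(W-W_{n\delta})/\sqrt{c_\infty W}$, where $c_\infty = \sigma^2/(-\alpha m'(\alpha))$ by \eqref{eq:limits of c_delta}. Because $\sqrt{\log(n\delta)}/\sqrt{\log n} \to 1$, the target \eqref{eq:limsup LIL along lattice} is equivalent to $\limsup_n T_n/\sqrt{\log n} = \sqrt{2}$ a.s., which is exactly the conclusion of Lemma~\ref{Lem:Asmussen+Hering} once its hypotheses are checked.

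For the upper bound $\limsup_n T_n / \sqrt{\log n} \leq \sqrt{2}$ a.s., I plan to verify the summability condition \eqref{eq:Berry-Esseen-type condition} by replacing $W - W_{n\delta}$ with a suitably truncated centered counterpart. The starting point is the decomposition $W - W_{n\delta} = \sum_{u \in \I_{n\delta}} e^{-\alpha S(u)}([W]_u - 1)$, which is a conditionally independent centered sum given $\H_{n\delta}$. The natural truncation is $V_{n\delta,\infty}$ from \eqref{repr2}, whose summands are bounded by $e^{-\alpha n\delta/2}$ and whose conditional variance satisfies $e^{\alpha n\delta}\Var_{n\delta}[V_{n\delta,\infty}] \to c_\infty W$ a.s.\ by Lemma~\ref{Lem:asymptotic var V_t,r}. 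A Borel--Cantelli argument based on the $L^2$-estimate $\E[([W]_u-1)^2]=\sigma_W^2$ together with the asymptotics of $\sum_{u \in \I_{n\delta}} e^{-2\alpha S(u)}$ from Lemma~\ref{Lem:asymptotics of W_t(2alpha)} will show $e^{\alpha n\delta/2}(W-W_{n\delta} - V_{n\delta,\infty}) \to 0$ a.s., reducing the task to verifying \eqref{eq:Berry-Esseen-type condition} for the normalized $V_{n\delta,\infty}$; this will be handled by Lemma~\ref{Lem:Berry-Esseen} applied conditionally on $\H_{n\delta}$, with the asymptotics of the normalizing denominator supplied by Lemma~\ref{Lem:asymptotic var V_t,r}.

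For the matching lower bound I will invoke the second assertion of Lemma~\ref{Lem:Asmussen+Hering}, which requires $T_n$ to be $\mathcal{R}_{n+k}$-measurable for a fixed $k$. Since $V_{n\delta,\infty}$ involves the limits $[W]_u$ and is only $\F_\infty$-measurable, I replace $\infty$ by a finite parameter $k\delta$ and work with $V_{n\delta, k\delta}$, which is $\H_{(n+k)\delta}$-measurable because its summands involve $[W_{(n+k)\delta - S(u)}]_u$. Applied to $V_{n\delta, k\delta}$, the same scheme produces $\limsup_n e^{\alpha n\delta/2} V_{n\delta,k\delta}/\sqrt{2 c_{k\delta} W \log n} = 1$ a.s. The discrepancy $V_{n\delta,\infty} - V_{n\delta,k\delta}$ has conditional variance of order $(c_\infty - c_{k\delta}) W e^{-\alpha n\delta}$ by Lemma~\ref{Lem:asymptotic var V_t,r}, and this variance tends to zero as $k \to \infty$ by \eqref{eq:limits of c_delta}; sending $k \to \infty$ after $n \to \infty$ recovers the sharp constant $\sqrt{2 c_\infty W}$.

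The main obstacle I anticipate is the verification of \eqref{eq:Berry-Esseen-type condition} for the normalized truncated sum: applying Lemma~\ref{Lem:Berry-Esseen} directly with the truncation at level $e^{-\alpha n\delta/2}$ used in \eqref{repr2} produces a Kolmogorov distance of order $1/\sqrt{c_\infty W}$ per $n$, which is bounded but not summable. The likely remedy is to refine the truncation to level $\varepsilon_n e^{-\alpha n\delta/2}$ for a summable sequence $\varepsilon_n \downarrow 0$ (say $\varepsilon_n = n^{-2}$), since this improves the Berry--Esseen ratio by a factor of $\varepsilon_n$ while the corresponding threshold $\varepsilon_n e^{\alpha(S(u) - n\delta/2)}$ on $|[W]_u-1|$ still diverges, preserving the asymptotic variance $c_\infty W e^{-\alpha n\delta}$. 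Controlling the extra Borel--Cantelli tail generated by this refinement using only the $L^2$-assumption (A4) is the technical heart of the proof.
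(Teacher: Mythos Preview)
Your overall scheme (truncate, apply the conditional Berry--Esseen Lemma~\ref{Lem:Berry-Esseen}, feed the result into Lemma~\ref{Lem:Asmussen+Hering}, and treat finite $r$ separately for the lower bound) is exactly the paper's. You have also correctly located the real difficulty: with the truncation from \eqref{repr2} the crude estimate
\[
\E_{\delta n}\big[|W_{\delta n,r}(u)|^3\big]\ \le\ e^{-\alpha\delta n/2}\,\E_{\delta n}\big[W_{\delta n,r}(u)^2\big]
\]
yields a Berry--Esseen ratio of order $1/\sqrt{c_r W}$, which is not summable.

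Where the proposal goes wrong is the proposed cure. Replacing the truncation level $e^{-\alpha\delta n/2}$ by $\varepsilon_n e^{-\alpha\delta n/2}$ with $\varepsilon_n=n^{-2}$ does make the Berry--Esseen ratio summable, but it destroys the control of the truncation error: using only the $L^2$ bound $\E[([W]_u-1)^2]=\sigma_W^2$ one gets, via $|X|\1_{\{|X|>a\}}\le X^2/a$,
\[
\E_{\delta n}\Big[e^{\alpha\delta n/2}\!\!\sum_{u\in\I_{\delta n}}\!\!e^{-\alpha S(u)}|[W]_u-1|\,\1_{\{\cdots>\varepsilon_n\}}\Big]
\ \le\ \frac{\sigma_W^2}{\varepsilon_n}\,e^{\alpha\delta n}\!\!\sum_{u\in\I_{\delta n}}\!\!e^{-2\alpha S(u)}
\ \sim\ \frac{\text{const}}{\varepsilon_n}\,W,
\]
which is of order $n^{2}$. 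No Borel--Cantelli argument based on (A4) alone will make this vanish, so the ``technical heart'' you describe cannot be carried out.

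The paper keeps the truncation in \eqref{repr2} as is and instead shows directly that
\[
I\ \defeq\ \sum_{n\ge0}e^{3\alpha\delta n/2}\sum_{u\in\I_{\delta n}}\E_{\delta n}\big[|W_{\delta n,r}(u)|^3\big]\ <\ \infty\quad\text{a.s.}
\]
This is done by writing the third moment as an integral against $F_{r+\delta n-S(u)}$, dominating it by $\int\min\{e^{3\alpha\delta n/2-3\alpha S(u)}x^3,1\}\,dG_\infty(x)$, passing to the associated random walk $(\mathsf S_k)$ via the many-to-one lemma, and then summing over $n$ through the first-passage times $\tau_{\delta n}$; a renewal estimate turns the sum into $C\int x^2\,dG_\infty(x)$, finite by Doob's maximal inequality. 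The point is that for fixed $x$ the sequence $n\mapsto\min\{e^{3\alpha\delta n/2-3\alpha S(u)}x^3,1\}$ is geometrically decaying away from the crossover, so the naive per-$n$ bound grossly overcounts.

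For the lower bound the paper does not compare $V_{\delta n,\infty}$ with $V_{\delta n,k\delta}$ via variances (a variance bound on the difference would not by itself yield an a.s.\ $\limsup$ inequality). Instead it uses the additive decomposition $W-W_{\delta n}=(W-W_{\delta(n+k)})+(W_{\delta(n+k)}-W_{\delta n})$, the already-proved $\liminf$ bound for the first piece, and the exact $\limsup=\sqrt{2c_{k\delta}W}$ for the second (which \emph{is} $\H_{\delta(n+k)}$-measurable), then lets $k\to\infty$.
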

\begin{proof}
Fix an arbitrary $\delta>0$ and $r\in \delta \N\cup\{\infty\}$. We claim
that \eqref{eq:Berry-Esseen-type condition} holds for the random variables
\begin{equation*}
T_n \defeq V_{\delta n, r}/ \sqrt{\Var_{\delta n}[V_{\delta n,r}]},	\quad n \in \N_0
\end{equation*}
and $\mathcal{R}_n=\H_{\delta n}$. Conditionally given
$\H_{\delta n}$, $V_{\delta n,r}$ is a weighted sum of
independent, centered random variables to which Lemma~\ref{Lem:Berry-Esseen}
applies.
In particular, \eqref{eq:Berry-Esseen} yields
\begin{align*}
\sup_{y\in\R}& \bigg|\Prob
\bigg(\frac{V_{\delta n,r}}{\sqrt{\Var_{\delta n} [V_{\delta n,r}]}} \leq y \,\bigg|\, \H_{\delta n}\bigg) - \Phi(y)\bigg|    \\
&\leq
C
{\frac{\sum_{u\in \I_{\delta n}} \E_{\delta n}[|W_{\delta n,r}(u) - \E_{\delta n}[W_{\delta n,r}(u)] |^3]}{
 (\Var_{\delta n}[V_{\delta n,r}])^{3/2}}}    \\
&\leq
8C\frac{\sum_{u\in\I_{\delta n}} \E_{\delta n}[|W_{\delta n,r}(u)|^3]}{(\Var_{\delta n}[V_{\delta n,r}])^{3/2}},
\end{align*}
where $C>0$ is a finite absolute constant.
In view of \eqref{eq:asymptotic var V_t,r},  the condition \eqref{eq:Berry-Esseen-type condition} will follow from
the almost sure finiteness of
\begin{align}    \label{eq:I defined}
I &\defeq
\sum_{n\geq 0} e^{3\alpha\delta n/2} \sum_{u\in\I_{\delta n}} \E_{\delta n}[|W_{\delta n,r}(u)|^3].
\end{align}
Here, $\E_{\delta n}[|W_{\delta n,r}(u)|^3] = e^{-3\alpha S(u)}\int_{[0,\infty)} x^3 \1_{\{e^{-\alpha\delta n/2} e^{\alpha S(u)}\geq x\}} \, {\rm d} F_{r+\delta n-S(u)}(x)$.
If $S(u) > \delta n + r$, then $F_{r+\delta n-S(u)}$ is the distribution function of the Dirac measure at $0$ and hence the integral vanishes.
Otherwise,  since $F_t \geq G_t \geq G_{\infty}$ pointwise for any $t>0$, we may estimate
\begin{align*}
\int_{[0,\infty)} &e^{3\alpha\delta n/2-3\alpha S(u)}x^3 \1_{\{e^{-\alpha\delta n/2} e^{\alpha S(u)}\geq x\}} \, {\rm d} F_{r+\delta n-S(u)}(x)\\
&\leq \int_{[0,\infty)}\min\{e^{3\alpha\delta n/2-3\alpha S(u)}x^3,1\}\, {\rm d} F_{r+\delta n-S(u)}(x) \\
&\leq \int_{[0,\infty)}\min\{e^{3\alpha\delta n/2-3\alpha S(u)}x^3,1\}\, {\rm d} G_{\infty}(x).
\end{align*}
As in Section \ref{sect:Nerman}, denote by $(\mathsf{S}_n)_{n \in \N_0}$ a random walk
with i.\,i.\,d.\ increments $\mathsf{S}_n-\mathsf{S}_{n-1}$, $n \in \N$
and increment law $\Prob(\mathsf{S}_{n}-\mathsf{S}_{n-1} \in \dx) \defeq e^{-\alpha x} \mu(\dx)$. By the many-to-one lemma \cite[Section 1.3]{Shi:2015},
\begin{align*}
\E[I]
& \leq \int_{[0,\infty)} \E\bigg[\sum_{n \geq 0}
\sum_{u \in \I_{\delta n}} \min\{e^{3\alpha\delta n/2-3\alpha S(u)}x^3,1\}\bigg]\, {\rm d} G_{\infty}(x)	\\
&= \int_{[0,\infty)}\E\bigg[\sum_{n\geq 0}
 \min\{e^{3\alpha\delta n/2-3\alpha  \mathsf S_{\tau_{\delta n}}}x^3,1\}e^{\alpha \mathsf S_{\tau_{\delta n}}}\bigg]\, {\rm d} G_{\infty}(x),
\end{align*}
where $\tau_{\delta n}=\inf\{k \in \N_0: \mathsf S_k> \delta n\}$ is the first-passage time
of the level $\delta n$ of the random walk $(\mathsf{S}_n)_{n \in \N_0}$ for $n \in \N_0$.
The integrand above can be estimated as follows
\begin{align*}
\sum_{n\geq 0}  &
\min\{e^{3\alpha\delta n/2-3\alpha \mathsf{S}_{\tau_{\delta n}}}x^3,1\}e^{\alpha\mathsf {S}_{\tau_{\delta n}}}
\le \sum_{n\geq 0}
\min\{e^{-3\alpha \mathsf{S}_{\tau_{\delta n}}/2 }x^3,1\}e^{\alpha\mathsf  S_{\tau_{\delta n}}}\\
&=x^3\sum_{n:\mathsf S_{\tau_{\delta n}}\ge\frac{2}{\alpha}\log x} e^{-\alpha\mathsf  S_{\tau_{\delta n}}/2 }
+\sum_{n:\mathsf S_{\tau_{\delta n}}
\leq \frac{2}{\alpha}\log x}e^{\alpha\mathsf  S_{\tau_{\delta n}} }.
\end{align*}
Let $\mathsf{X}$ be a copy of $\mathsf S_1$ independent of $(\mathsf S_n)_{n \in \N_0}$.
Since for any $i \in \N$ the set $\{n \in \N_0:\tau_{\delta n}= i\}$ has cardinality at most $\delta^{-1}(\mathsf S_{i}-\mathsf S_{i-1})+1$,
for any real $y$ and $\beta>0$, we may estimate
\begin{align*}
\E\bigg[\sum_{n:\mathsf S_{\tau_{\delta n}}\ge y}e^{-\beta \mathsf S_{\tau_{\delta n}}}\bigg]
&\le \E\bigg[\sum_{i\ge 1}\1_{\{\mathsf S_i\ge y\}}e^{-\beta \mathsf S_i}(\delta^{-1}(\mathsf S_{i}-\mathsf S_{i-1})+1)\bigg]	\\
&= e^{-\beta y} \E\bigg[\sum_{i \geq 0}\1_{\{\mathsf S_i+\mathsf{X}- y \geq 0\}}
e^{-\beta (\mathsf S_{i}+ \mathsf{X}-y)}(\delta^{-1} \mathsf{X}+1)\bigg].
\end{align*}
Now let $f(x) \defeq e^{\beta x} \1_{(-\infty,0]}(x)$ for $x \in \R$,
and let $\mathsf{U}$ denote the renewal measure associated with $(\mathsf S_n)_{n \in \N_0}$,
i.e., $\mathsf{U}(B) = \sum_{n \in \N_0} \Prob(S_n \in B)$ for Borel sets $B \subseteq \R$.
The function $f$ is directly Riemann integrable, hence $f*\mathsf{U}(x) \defeq \int f(x-u) \, \mathsf{U}(\du)$
is bounded by some finite constant $C>0$ (that may depend on $\beta$)
by the key renewal theorem. Consequently,
\begin{align*}
e^{-\beta y} & \E\bigg[\sum_{i \geq 0}\1_{\{\mathsf S_i+\mathsf{X}- y \geq 0\}}
e^{-\beta (\mathsf S_{i}+ \mathsf{X}-y)}(\delta^{-1} \mathsf{X}+1)\bigg]	\\
&= e^{-\beta y} \E\big[f*\mathsf{U}(y-x) (\delta^{-1} \mathsf{X}+1)\big]		\\
&\leq Ce^{-\beta y}\E[\delta^{-1} \mathsf{X}+1]
= C(-\delta^{-1}m'(\alpha)+1) e^{-\beta y}.
\end{align*}
Similarly, by increasing the value of $C>0$ if necessary,
\begin{align*}
\E\bigg[\sum_{n:\mathsf S_{\tau_{\delta n}} \leq y}e^{\beta \mathsf S_{\tau_{\delta n}}}\bigg]
&\leq
\E\bigg[\sum_{i \geq 1}\1_{\{\mathsf{S}_i \leq y\}} e^{\beta \mathsf{S}_i}(\delta^{-1}(\mathsf{S}_{i}-\mathsf{S}_{i-1})+1)\bigg]	\\
&=e^{\beta y}\E\bigg[\sum_{i \geq 0} \1_{\{\mathsf S_i+\mathsf{X}- y\le 0\}} e^{\beta (\mathsf{S}_i+\mathsf{X}-y)}(\delta^{-1} \mathsf{X}+1)\bigg]\\
&\leq Ce^{\beta y}\E[\delta^{-1} \mathsf{X}+1] = C(-\delta^{-1}m'(\alpha)+1) e^{\beta y}.
\end{align*}
Hence, for some constant $C'>0$,
\begin{align*}
\E\bigg[\sum_{n\geq 0}
\min\{e^{3\alpha\delta n/2-3\alpha S_{\tau_{\delta n}}}x^3,1\}e^{\alpha \mathsf{S}_{\tau_{\delta n}}}\bigg] \leq Cx^2,
\end{align*}
which, in turn gives,
\begin{align*}
\E[I] \leq C \int_{[0,\infty)} x^2\, {\rm d} G_{\infty}(x) = C \E[\sup_{s \geq 0} (W_s\!-\!1)^2] < \infty
\end{align*}
in view of \eqref{eq:Doob's max L^2 W_t}.
In particular, $I<\infty$ almost surely and the condition \eqref{eq:Berry-Esseen-type condition} is fulfilled.
\smallskip

\noindent
An appeal to Lemma \ref{Lem:Asmussen+Hering}
with $T_n = V_{\delta n,r}/ \sqrt{\Var_{\delta n} [V_{\delta n,r}]}$
in combination with \eqref{eq:asymptotic var V_t,r} gives, for fixed $r\in\delta\N$,
\begin{equation}\label{eq:limsup V_delta n,r}
\limsup_{n\to\infty}\sqrt{\frac{e^{\alpha\delta n}}{\log (\delta n)}} V_{\delta n,r} = \sqrt{2 c_r W}	\quad\text{a.\,s.}
\end{equation}
because $V_{\delta n,r}$ is $\H_{\delta n+r}$-measurable; whereas
\begin{equation}	\label{eq:limsup V_delta n,infty}
\limsup_{n\to\infty}\sqrt{\frac{e^{\alpha\delta n}}{\log (\delta n)}} V_{\delta n,\infty}
\leq	\sqrt{2 c_\infty W} = \Big(\frac{2\sigma^2}{-\alpha m'(\alpha)} W \Big)^{\! 1/2}	\quad	\text{a.\,s.}
\end{equation}
Next, we shall prove that \eqref{eq:limsup V_delta n,r} and \eqref{eq:limsup V_delta n,infty} entail
\begin{equation}	\label{eq:limsup W_delta n+r-W_delta n}
\limsup_{n\to\infty} \sqrt{\frac{e^{\alpha\delta n}}{\log (\delta n)}}(W_{\delta n +r}-W_{\delta n})
= \sqrt{2 c_r W}	\quad	\text{a.\,s.}
\end{equation}
for fixed $r\in\delta\N$ and
\begin{equation}	\label{eq:limsup W-W_delta n}
\limsup_{n\to\infty}\sqrt{\frac{e^{\alpha\delta n}}{\log
(\delta n)}}(W-W_{\delta n}) \leq \sqrt{2 c_\infty W}
= \Big(\frac{2\sigma^2}{-\alpha m'(\alpha)} W\Big)^{\!1/2}
\quad\text{a.\,s.}
\end{equation}
To this end, it is
enough to check that, for $r\in \delta\N\cup\{\infty\}$,
\begin{align}
\lim_{n\to\infty} e^{\alpha\delta n/2} \sum_{u\in\I_{\delta n}}e^{-\alpha S(u)} & |[W_{r+\delta n - \cdot}]_u \circ S(u)-1|	\notag	\\
&\cdot \1_{\{e^{\alpha\delta n/2}e^{-\alpha S(u)}|[W_{r+\delta n - \cdot}]_u \circ S(u)-1|> 1\}}=0	\quad	\text{a.\,s.}	\label{eq:rel1}
\end{align}
and
\begin{equation}	\label{eq:rel2}
\lim_{n\to\infty} e^{\alpha\delta n/2} \sum_{u\in\I_{\delta n}} |\E_{\delta n}[W_{\delta n,r}(u)]|
= 0	\quad	\text{a.\,s.}
\end{equation}
Since, for $u \in \I_{\delta n}$, $\E_{\delta n}[[W_r]_u-1]=0$ and $S(u)$ is $\H_{\delta n}$-measurable, we have
\begin{align*}
\big|\E_{\delta n}&[W_{\delta n,r}(u)]\big|	\\
&=
\big|\E_{\delta n} \big[e^{-\alpha S(u)}([W_{r+\delta n - \cdot}]_u \circ S(u)\!-\!1)
\1_{\{e^{\alpha\delta n/2}e^{-\alpha S(u)}|[W_{r+\delta n - \cdot}]_u \circ S(u)-1|\leq 1\}}\big]\big|	\\
&= \big|\E_{\delta n} \big[e^{-\alpha S(u)}([W_{r+\delta n - \cdot}]_u \circ S(u)\!-\!1)\1_{\{e^{\alpha\delta n/2} e^{-\alpha S(u)}|[W_{r+\delta n - \cdot}]_u \circ S(u)-1|> 1\}}\big]\big|
\\
&\leq \E_{\delta n} \big[e^{-\alpha S(u)}|[W_{r+\delta n - \cdot}]_u \circ S(u)\!-\!1|\1_{\{e^{\alpha\delta n/2}e^{-\alpha S(u)} |[W_{r+\delta n - \cdot}]_u \circ S(u)-1|> 1\}}\big].
\end{align*}
Hence, both relations~\eqref{eq:rel1} and~\eqref{eq:rel2} follow if
we can show that
\begin{equation}	\label{eq:series test}
\E\bigg[\sum_{n\geq 0}e^{\alpha\delta n/2}\sum_{u\in\I_{\delta n}}e^{-\alpha S(u)}
\int_{(e^{-\alpha\delta n/2}e^{\alpha S(u)},\, \infty)} \!\!\!\!\! x \, {\rm d}F_{r+\delta n - S(u)}(x) \bigg] < \infty.
\end{equation}
To see this, notice that
\begin{align*}
\E\bigg[\sum_{n\geq 0}e^{\alpha\delta n/2}\sum_{u\in\I_{\delta n}} & e^{-\alpha S(u)}
\int_{(e^{-\alpha\delta n/2}e^{\alpha S(u)}, \infty)} \!\!\!\!\! x \, {\rm d}F_{r+\delta n - S(u)}(x) \bigg]	\\
&\leq \E\bigg[\sum_{n\geq 0}e^{\alpha\delta n/2}\sum_{u\in\I_{\delta n}} e^{-\alpha S(u)}
\int_{(e^{\alpha(\delta n/2 + r)}, \infty)} \!\!\!\!\! x \, {\rm d}G_r(x) \bigg]	\\
&= \sum_{n\geq 0}e^{\alpha\delta n/2}\int_{(e^{\alpha(\delta n/2 + r)}, \infty)} \!\!\!\!\! x \, {\rm d}G_r(x) 	\\
&\leq \int_{(1,\infty)}x\bigg(\sum_{n=0}^{(2/\delta)(\frac{\log x}\alpha - r)}e^{\alpha\delta n/2} \bigg) \, {\rm d}G_r(x)	\\
&\leq \mathrm{const} \cdot \int_{(1,\infty)} x^2 \, {\rm d}G_r(x)< \infty.
\end{align*}
The proof of \eqref{eq:limsup W_delta n+r-W_delta n} and \eqref{eq:limsup W-W_delta n} is complete.

It remains to show that ``$\leq$'' can be replaced by ``$=$''
in~\eqref{eq:limsup W-W_delta n}. As has already been remarked at the beginning of the
proof, once we have proved \eqref{eq:limsup W-W_delta n}, we also have
\begin{equation}	\label{eq:liminf W-W_delta n}
{\lim\inf}_{n\to\infty} \sqrt{\frac{e^{\alpha\delta n}}{\log
(\delta n)}}(W-W_{\delta n})\geq -\Big(\frac{2 \sigma^2}{-\alpha m'(\alpha)} W\Big)^{\! 1/2}	\quad	\text{a.\,s.}
\end{equation}
For any $r\in \delta \N$, the following equality holds
\begin{multline*}
\sqrt{\frac{e^{\alpha \delta n}}{\log (\delta n)}}(W-W_{\delta n})
=\sqrt{\frac{e^{\alpha(\delta n+r)}}{\log (\delta n+r)}}(W-W_{\delta n+r})
\sqrt{\frac{\log (\delta n+r)}{\log (\delta n)}} e^{-\alpha \delta r/2}\\+\sqrt{\frac{e^{\alpha\delta n}}{\log (\delta n)}}(W_{\delta n+r}-W_{\delta n}).
\end{multline*}
From \eqref{eq:limsup W_delta n+r-W_delta n} and \eqref{eq:liminf W-W_delta n} we infer
\begin{align*}
\limsup_{n\to\infty} & \sqrt{\frac{e^{\alpha\delta n}}{\log (\delta n)}}(W-W_{\delta n})	\\
&\geq \liminf_{n\to\infty} \sqrt{\frac{e^{\alpha(\delta n+r)}}{\log (\delta n+r)}}(W-W_{\delta n+r})
\sqrt{\frac{\log(\delta n+r)}{\log (\delta n)}}e^{-\alpha \delta r/2}	\\
&\hphantom{\geq}~+\limsup_{n\to\infty}\sqrt{\frac{e^{\alpha\delta n}}{\log (\delta n)}}(W_{\delta n+r}-W_{\delta n})	\\
&\geq -\Big(\frac{2 \sigma^2}{-\alpha m'(\alpha)} W\Big)^{\! 1/2} e^{-\alpha \delta r/2} + \big(2 c_r W\big)^{\! 1/2}.
\end{align*}
Letting $r\to\infty$, we arrive at
\begin{equation*}
\limsup_{n\to\infty} \sqrt{\frac{e^{\alpha\delta n}}{\log (\delta n)}}(W-W_{\delta n})
\geq \big(2 c_\infty W\big)^{\! 1/2} = \Big(\frac{2 \sigma^2}{-\alpha m'(\alpha)} \Big)^{\! 1/2}	\quad \text{a.\,s.}
\end{equation*}
\end{proof}


\begin{proof}[Proof of Theorem \ref{Thm:LIL}]
We have to show that \eqref{eq:limsup LIL} with $\delta n$ replacing $t$ entails \eqref{eq:limsup LIL}.
Plainly,
\begin{align*}
\limsup_{t\to\infty} \sqrt{\frac{e^{\alpha t}}{\log t}}(W\!-\!W_t)
\geq \limsup_{n\to\infty} \sqrt{\frac{e^{\alpha\delta n}}{\log (\delta n)}}(W\!-\!W_{\delta n})
= \Big(\frac{2 \sigma^2}{-\alpha m'(\alpha)} W\Big)^{\!1/2}	\quad \text{a.\,s.}
\end{align*}
Thus, it remains to prove the converse inequality
\begin{equation}	\label{eq:limsup LIL W-W_t leq}
\limsup_{t \to \infty} \sqrt{\frac{e^{\alpha t}}{\log t}}(W-W_t) \leq \Big(\frac{2 \sigma^2}{-\alpha m'(\alpha)} W\Big)^{\!1/2}
\quad	\text{a.\,s.},
\end{equation}
To this end,
fix arbitrary $\delta,\rho >0$, let $n \in \N_0$ and notice that
the following particular case of \eqref{eq:limsup W_delta n+r-W_delta n} holds
\begin{equation*}
\limsup_{n\to\infty} \sqrt{\frac{e^{\alpha\delta n}}{\log (\delta n)}}(W_{\delta n}-W_{\delta(n+1)})
= \sqrt{2 c_\delta W}	\quad	\text{a.\,s.}
\end{equation*}
This in combination with the fact that
$W_{\delta n}-W_{\delta(n+1)}$ is $\H_{\delta(n+1)}$-measurable
and the conditional Borel-Cantelli lemma (see, for instance, Theorem 5.3.2 on p.\;240 in \cite{Durrett:2010}) implies
\begin{equation}	\label{eq:conditional BC}
\sum_{n\geq 1}\Prob(W_{\delta n}-W_{\delta(n+1)}>\varepsilon_n|\H_{\delta n}) < \infty
\quad	\text{a.\,s.}
\end{equation}
for
\begin{equation}	\label{eq:epsilon_n}
\varepsilon_n = (1+\rho)\sqrt{\frac{\log (\delta n)}{e^{\alpha \delta n}}} \sqrt{2 c_\delta W_{\delta n}}.
\end{equation}
For $t\in [\delta n, \delta (n+1))$, define
\begin{equation*}
A_t \defeq \E_t[(W_{\delta(n+1)}-W_t)^2]
\end{equation*}
and observe that, in view of Lemma \ref{Lem:asymptotics of W_t(2alpha)}
and the fact that the martingale $(W_t)_{t\geq 0}$ is $L^2$-bounded,
\begin{equation}	\label{eq:aux17}
\sup_{t \geq 0} e^{\alpha t}A_t = \sup_{t \geq 0} e^{\alpha t}\sum_{u\in\I_t}e^{-2\alpha S(u)} v_{\delta(n+1)-S(u)} <\infty
\quad \text{a.\,s.}
\end{equation}
Now let
\begin{equation*}
B_n \defeq \sup_{t \in [\delta n, \delta(n+1))}(W_{\delta n}-W_t - (2A_t)^{1/2})
\end{equation*}
and
\begin{equation*}
t^*_n\defeq\inf\{s\geq \delta n:
W_{\delta n}-W_s-(2A_s)^{1/2} > \varepsilon_n\}.
\end{equation*}
Then $t^*_n$, being the hitting time of an open set by an adapted, right-continuous process,
is an optional time for $(\H_t)_{t \geq 0}$. Thus,
\begin{align*}
\Prob(&W_{\delta n}-W_{\delta(n+1)}>\varepsilon_n|\H_{\delta n})
\geq \Prob(W_{\delta n}-W_{\delta (n+1)}>\varepsilon_n, t^*_n < \delta(n+1)|\H_{\delta n})	\\
&\geq \Prob(W_{t^*_n}-W_{\delta (n+1)}>-(2A_{t^*_n})^{1/2} , t^*_n < \delta(n+1)|\H_{\delta n})	\\
&=\E\Big[\Prob\big(W_{t^*_n}-W_{\delta(n+1)}>-(2A_{t^*_n})^{1/2}\big|\H_{t^*_n}\big)\1_{\{ t^*_n < \delta(n+1)\}} \Big| \H_{\delta n}\Big]	\\
&\geq 2^{-1}\Prob(t^*_n<\delta (n+1)|\H_{\delta n})
=2^{-1}\Prob(B_n>\varepsilon_n|\H_{\delta n})
\end{align*}
having used the definition of $t^\ast_n$ and the fact that $t_n^*$ is optional for the second
inequality, the tower property of the conditional expectations for
the first equality and the Markov inequality for the last inequality.

This entails
\begin{equation*}
\sum_{n\geq 1} \Prob(B_n>\varepsilon_n|\H_{\delta n}) < \infty
\quad	\text{a.\,s.}
\end{equation*}
and thereupon $B_n\leq \varepsilon_n$ eventually a.\,s. Thus,
\begin{align*}
&\limsup_{t\to\infty}\sqrt{\frac{e^{\alpha t}}{\log t}}(W-W_t)	\\
&\leq e^{\alpha\delta/2} \bigg(\limsup_{n\to\infty}\sqrt{\frac{e^{\alpha \delta n}}{\log (\delta n)}}(W\!-\!W_{\delta n})
+\limsup_{n\to\infty}\sqrt{\frac{e^{\alpha \delta n}}{\log (\delta n)}} \sup_{t \in [\delta n,\delta(n+1))} \!\!\! (W_{\delta n}\!-\!W_t)\bigg)	\\
&=e^{\alpha\delta/2} \bigg(\Big(\frac{2\sigma^2}{-\alpha m'(\alpha)} W\Big)^{\!1/2} +\limsup_{n\to\infty}\sqrt{\frac{e^{\alpha \delta n}}{\log (\delta n)}}B_n\bigg)	\\
&\leq e^{\alpha\delta/2} \bigg(\Big(\frac{2\sigma^2}{-\alpha m'(\alpha)} W\Big)^{\!1/2} + (1+\rho) \sqrt{2 c_\delta W}\bigg),
\end{align*}
where the equality is a consequence of \eqref{eq:aux17}.
Recall the definition of $c_\delta$ from \eqref{eq:asymptotics of W_t(2alpha) generalized recalled}
and notice that
$\lim_{\delta\to 0+} c_\delta=0$ by Corollary \ref{Cor:asymptotics of W_t(2alpha)}.
With this at hand, letting $\delta \downarrow 0$ gives \eqref{eq:limsup LIL W-W_t leq}.
\end{proof}

\subsubsection*{Acknowledgements.} This work was initiated during the visit of A. Iksanov to Innsbruck in August 2019. He gratefully acknowledges hospitality and the financial support from the grant ME3625/3-1. A. Iksanov was supported by Ulam programme funded by the Polish  national agency for academic exchange (NAWA),  project  no.PPN/ULM/2019/1/00010/DEC/1.

%
%
%
%


\end{document}